\newcommand \be     {\begin{equation}}
\newcommand \ee     {\end{equation}}
\newcommand \RR      {\mathbb{R}}
\newcommand \NN      {\mathbb{N}}
\newcommand \del     \partial
\newcommand{\vp}{\varphi}
\newcommand{\clg}[1]{{\mathcal{#1}}}
\newcommand{\td}[1]{{\tilde{#1}}}
\def\XXint#1#2#3{{\setbox0=\hbox{$#1{#2#3}{\int}$}
\vcenter{\hbox{$#2#3$}}\kern-.5\wd0}}
 \newcommand{\R}{\mathbb R}
 \newcommand{\p}{\partial}
 \newcommand{\Chi}{{\bf \chi}}
\newtheorem{theorem}{Theorem}[section]
\newtheorem{proposition}[theorem]{Proposition}
 \newtheorem{remark}[theorem]{Remark}
\newtheorem{lemma}[theorem]{Lemma}
\newtheorem{corollary}[theorem]{Corollary}
\newtheorem{definition}[theorem]{Definition}
 \def\beqs{\begin{eqnarray*}}
 \def\enqs{\end{eqnarray*}}
 \def\beq{\begin{eqnarray}}
 \def\enq{\end{eqnarray}}
\begin{document}
\title{Persistence of solutions to higher order
nonlinear Schr\"odinger equation}

\author{Xavier Carvajal$^1$, Wladimir Neves$^1$}

\date{}

\maketitle

\footnotetext[1]{ Instituto de Matem\'atica, Universidade Federal
do Rio de Janeiro, C.P. 68530, Cidade Universit\'aria 21945-970,
Rio de Janeiro, Brazil. E-mail: {\sl carvajal@im.ufrj.br,
wladimir@im.ufrj.br.}
%
%
\newline
\textit{Key words and phrases.} Schr\"{o}dinger equation,
Korteweg-de Vries equation, global well-posed.}

%

%
\begin{abstract} Applying an Abstract Interpolation Lemma, we can show
persistence of solutions of
the initial value problem
to higher order nonlinear Schr\"odinger equation, also called
Airy-Schr\"odinger equation,
in weighted
Sobolev spaces $\mathcal{X}^{2,\theta}$, for $0 \le \theta \le 1$.
\end{abstract}
%

\maketitle

\section{Introduction} \label{IN}

Motivated by the difficulty question of how to show persistent
properties of solutions to dispersive equations in the weighted
Sobolev spaces, we proved an Abstract Interpolation Lemma. Then,
applying this lemma we were able to show persistence for the so
called Airy-Schr\"odinger equation in weighted Sobolev spaces
$\mathcal{X}^{s,\theta}$, see definition in equation \eqref{SSWW},
for $0 \le \theta \le 1$ and $s=2$.

Here, we have focus on the exponent of the weighted, that is, we
have been concentrated on $\mathcal{X}^{2,\theta}$ for $\theta
<1$. In this direction our result is new, moreover to higher order
nonlinear Schr\"odinger equations. We address the reader the paper
of Nahas and Ponce \cite{NP} for similar results on the persistent
properties of solutions to semi-linear Schr\"odinger equation in
weighted Sobolev spaces. Although, that paper used different
technics from ours.


\subsection{Purpose and some results}

In this paper we describe how to obtain some new results on the
persistent properties
in weighted Sobolev spaces for solutions of the initial value problem (IVP)
\begin{equation}\label{IVP}
\begin{cases}
\p_t u + i \, a\,\p^2_xu + b\,\p^3_x u + i \,c \,|u|^{2}u + d\,
|u|^2 \p_x u+e\, u^2\p_x\bar{u}=  0, \quad (t,x) \in \R^2,\\
u(x,0)  =  u_0(x),
\end{cases}
\end{equation}
where $u$ is a complex-valued function, $a, b, c, d$ and $e$ are
real parameters and $u_0$ is a given initial-data. This model was
proposed by Hasegawa and Kodama in \cite{H-K, Ko} to describe the
nonlinear propagation of pulses in optical fibers. In literature,
it is called as a higher order nonlinear Schr\"odinger equation or
also Airy-Schr\"odinger equation. Moreover, as we are going to
show below in this first section, depending on the values of the
constants $a, b, c, d$ and $e$, \eqref{IVP} describes many
interesting known problems.

\medskip
It was shown in \cite{C2} that the flow associated to the IVP
\eqref{IVP} leaves the following quantity
\begin{equation}
I_1(v(t)) := \int_{\R}|v(x,t)|^2 \, dx, \label{CL1}
\end{equation}
conserved in time. Also, when $b e \neq 0$ we have the following
conserved quantity
\begin{equation}
\begin{aligned}
I_2(v(t)) := C_1 \, \int_{\R}|\partial_x v(x,t)|^{2}\,dx
&+ C_2 \, \int_{\R}|v(x,t)|^4 \; dx \\
&+ C_3 \, \textrm{Im} \,
\int_{\R}v(x,t)\partial_x\overline{v(t,x)}dx, \, \label{CL2}
\end{aligned}
\end{equation}
where $C_1= 3 \, b \,e$, $C_2= -e\, (e+d)/2$ and $C_3= (3\,b \, c
- a\, (e+d))$.

\bigskip
Regarding the IVP \eqref{IVP} with $b\neq 0$, Laurey in \cite{C2}
showed that the IVP is locally well-posed in $H^{s}(\R)$ with $s >
3/4$, and using the quantities \eqref{CL1} and \eqref{CL2} she
proved the global well-posedness in $H^s(\R)$ with $s\geq 1$. In
\cite{G} Staffilani established the local well-posedness in
$H^{s}(\R)$ with $s\geq 1/4$, for the IVP associated to
(\ref{IVP}), improving Laurey's result.

\medskip
In the problem \eqref{IVP}, when $a$, $b$ are real functions of
$t$, $(b\neq 0)$, was proved in \cite{CL} the local well-posedness
in $H^{s}(\R)$, for $s \geq 1/4$. Moreover, in \cite{Cv3} when $c=
(d-e) \, a / 3 \, b$, global well-posedness was established in
$H^s(\R)$ with $s> 1/4$. Also, in \cite{CM} one has the unique
continuation property for the solution of \eqref{IVP}.

\medskip
One stress the importance of the weighted Sobolev spaces. This
question goes back to work of Kato \cite{KT1}, where the space
$\mathcal{X}^{2 r, r}$ for $(r=1,2,\ldots)$ was first introduced
to prove well-posedness with weight for the KdV and generalized
KdV equations. Following Kato, we observe that functions in the
Sobolev spaces $H^s$ do not necessarily decay fast as $|x| \to
\infty$. Therefore, since we want to prove well-posedness in
spaces of fast-decaying functions, a simple choice is a weighted
Sobolev space $H^s(\RR) \cap L^2\big(\omega(x)\; dx\big)$ for some
appropriated weight function $\omega$, see \cite{KT1}.

\medskip
One of the purposes of the present paper is to show well-posedness
of \eqref{IVP}, with $b \neq 0$, in the weighted Sobolev space
$\clg{X}^{2,\theta}$, for $\theta \in [0,1]$. The most difficult
question in this work is to prove the persistence in the weighted
Sobolev space to the solution $u(t)$ of the IVP \eqref{IVP} given
by the global theory in $H^2$, which seems to be so regular, but
it is not the point of the paper. To establish this, we
approximate the solution by a sequence of smoothing solutions of
\eqref{IVP} (see Lemma \ref{otg}). We show that this sequence
belongs to a family $\clg{A}$ of functions (see conditions
\eqref{AIL0}-\eqref{AILC4}) where is possible apply the Abstract
Interpolation Lemma (Lemma \ref{1CL}), which permits to obtain the
persistence in $\clg{X}^{2,\theta}$ for this sequence of smoothing
solutions. Then passing to the limit in this sequence, we get the
persistence for the solution $u(t)$ in $\clg{X}^{2,\theta}$ as
desired.
Moreover, since $\mathcal{X}^{s,1} \subseteq
\mathcal{X}^{s,\theta}$, for all $s\in \mathbb{R}$ and $\theta\in
[0,1]$, see Remark \ref{IWSS}, we have also extended the
well-posedness results in \cite{KT1} for the KdV and generalized
KdV obtained in the weighted Sobolev space $\clg{X}^{2,1}$.
The authors would like to observe that, as it is known by them, it
is the first time in literature, where the weighted Sobolev space
$\clg{X}^{s,\theta}$ for $\theta \in [0,1]$ appears.

\medskip
An outline of this paper follows. In the rest of this section we
fix the notation, give the definition of well-posedness and
present some background concerning the theory of well-posedness
for the Airy-Schr\"odinger equation.
The Abstract Interpolation Lemma is given at Section 2. In Section
3, first we show some conserved quantities, and prove a nonlinear
estimate. Then, we formulate the approximated problems associated
to the IVP \eqref{IVP} and prove Lemma \ref{otg}, which is
important to show Theorem \ref{t1.2} at the end of this section.

\bigskip
\subsection{Notation and background}

At this point we fix some functional notation used in the paper.
By $dx$ we denote the Lebesgue measure on $\RR$ and, for $\theta
\geq 0$,
$$
  \begin{aligned}
  d\mu_\theta(x)&:= (1 + |x|^2)^ \theta \; dx,
  \\
    d\dot{\mu}_\theta(x)&:= |x|^{2 \theta} \; dx
  \end{aligned}
$$
denote the Lebesgue-Stieltjes measures on $\RR$. Hence, given a
set $X$, a measurable function $f \in L^2(X;d\mu_\theta)$ means
that
$$
  \|f\|_{L^2(X ; d\mu_\theta)}^2= \int_X |f(x)|^2 \; d\mu_\theta(x)< \infty.
$$
When $X= \RR$, we write: $L^2(d\mu_\theta) \equiv
L^2(\RR;d\mu_\theta)$, and for simplicity
$$
  L^2 \equiv L^2(d\mu_0), \quad L^2(d\mu)
  \equiv L^2(d\mu_1).
$$
Analogously, for the measure $d\dot{\mu}_\theta$. We will use the
Lebesgue space-time $L_{x}^{p}\mathcal{L}_{\tau}^{q}$ endowed with
the norm
$$
\|f\|_{L_{x}^{p}\mathcal{L}_{\tau}^{q}} = \big\|
\|f\|_{\mathcal{L}_{\tau}^{q}} \big\|_{L_{x}^{p}} = \Big(
\int_{\R} \Big( \int _{0}^{\tau} |f(x,t)|^{q} dt \Big)^{p/q} dx
\Big)^{1/p} \quad (1 \leq p,q < \infty).
$$
When the integration in the time variable is on the whole real
line, we use the notation $\|f\|_{L_{x}^{p}L_{t}^{q}}$. The
notation $\|u\|_{L^p}$ is used when there is no doubt about the
variable of integration. Similar notations when $p$ or $q$ are
$\infty$.
As usual, $H^s \equiv H^s(\RR)$, $\dot{H}^s \equiv \dot{H}^s(\RR)$
are the classic Sobolev spaces in $\RR$, endowed respectively with
the norms
$$
\|f\|_{H^s}:= \|\widehat{f}\|_{L^2(d\mu_s)},
\quad
\|f\|_{\dot{H}^s}:=\|\widehat{f}\|_{L^2(d\dot{\mu}_s)}.
$$
In this work, we study the solutions of \eqref{IVP} in the Sobolev
spaces with weight $\mathcal{X}^{s,\theta}$, defined as
\begin{equation}
\label{SSWW}
  \mathcal{X}^{s,\theta}:=H^s \cap L^2(d\mu_\theta),
\end{equation}
with the norm
$$
\| f\|_{\mathcal{X}^{s,\theta}}:=\| f\|_{H^{s}}+\|
f\|_{L^2(d\mu_\theta)}.
$$
%
%
\begin{remark}
\label{IWSS} We remark that, $\mathcal{X}^{s,1} \subseteq
\mathcal{X}^{s,\theta}$, for all $s\in \mathbb{R}$ and  $\theta\in
[0,1]$. Indeed, using H\"older's inequality
%
$$
  \|f\|_{L^2(d\dot{\mu}_\theta)} \leq \|f\|_{L^2}^{1-\theta} \;
  \|f\|_{L^2(d\dot{\mu})}^\theta.
$$
\end{remark}
%
%

\bigskip
The following definition tell us in which sense we consider the
well-posedness for the IVP \eqref{IVP}.

\begin{definition}
Let $X$ be a Banach space and $T>0$. We say that the IVP
\eqref{IVP} is locally well-posed in $X$, if the solution $u$
uniquely exists in certain time interval $[-T,T]$ (unique
existence),
describes a continuous curve in $X$ in the interval $[-T,T]$
whenever initial data belongs to $X$ (persistence), and
varies continuously depending upon the initial data (continuous
dependence), that is, continuity of the application
$$
  u_{0} \mapsto u \quad \text{from $X$ to $\mathcal{C}([-T,T];X)$}.
$$
%
Moreover, we say that the IVP \eqref{IVP} is globally well-posed
in $X$ if the same properties hold for all time $T>0$. If some
hypotheses in the definition of local well-posed fail, we say that
the IVP is ill-posed.
\end{definition}
%
\bigskip

Particular cases of \eqref{IVP} are the following:
\begin{itemize}
\item Cubic nonlinear  Schr\"odinger equation (NLS), ($a=\pm 1$,
$b=0$, $c=-1$, $d=e=0$).
\begin{align}\label{3y0}
iu_{t} \pm u_{xx} + |u|^{2}u =  0, \quad x,t \in \R.
\end{align}
The best known local result for the IVP associated to (\ref{3y0})
is in $H^{s}(\R)$, $s \geq 0$, obtained by Tsutsumi \cite{Ts}.
Since the $L^2$ norm is preserved in (\ref{3y0}), one has that
(\ref{3y0}) is globally well-posed in $H^{s}(\R)$, $s \geq 0$.
\item Nonlinear Schr\"odinger equation with derivative ($a=-1 $,
$b=0$, $c=0$, $d=2e$).
\begin{align}\label{4y0}
iu_{t} + u_{xx} + i\lambda(|u|^{2}u)_{x}=0, \quad x,t\in \R,
\end{align}
where $\lambda \in \R$. The best known local result for the IVP
associated to  (\ref{4y0}) is in $H^{s}(\R)$, $s \geq 1/2$,
obtained by Takaoka \cite{T1}. Colliander et al. \cite{C-K-S-T-T3}
proved that (\ref{4y0}) is globally well-posed in $H^{s}(\R)$, $s
>1/2$. \item Complex modified  Korteweg-de Vries (mKdV) equation
($a=0$, $b=1$, $c=0$, $d=1$, $e=0$)
\begin{align}\label{83y0}
u_{t}+u_{xxx}+|u|^{2}u_{x}=0, \quad x,t \in \R.
\end{align}
If $u$ is real, (\ref{83y0}) is the usual mKdV equation. Kenig et
al. \cite{KPV1} proved that the IVP associated to it is locally
well-posed in $H^{s}(\R)$, $s \geq 1/4$ and Colliander et al.
\cite{C-K-S-T-T2}, proved that \eqref{83y0} is globally well-posed
in $H^{s}(\R)$, $s >1/4$.

\item When $a \neq 0$ and $b=0$, we obtain a particular case of
the well-known mixed nonlinear Schr\"odinger equation
\begin{align}\label{6y0}
u_{t}=i a u_{xx}+\lambda(|u|^{2})_{x}u+g(u),\quad x,t \in \R,
\end{align}
where $g$ satisfies some appropriated conditions and $\lambda \in
\R$ is a constant. Ozawa and Tsutsumi in \cite{[O-T]} proved that
for any $\rho >0$, there is a positive constant $T(\rho)$
depending only on $\rho$ and $g$, such that the IVP (\ref{6y0}) is
locally well-posed in $H^{1/2}(\R)$, whenever the initial data
satisfies
\[\|u_{0}\|_{\mathrm{H}^{1/2}} \leq \rho.\]
\end{itemize}
There are other dispersive models similar to \eqref{IVP}, see for
instance \cite{[A],[CC],[G-T-V],[P-Na],[P-S-S-M],[SS]} and the
references therein.

\medskip
\begin{remark}
1. We can suppose $C_3 = 0$ in \eqref{CL2}. In fact, when $C_3
\neq 0$ we have the following gauge transformation
\begin{equation*}
v(x,t)=\exp\Big(ia\,x+i(a\,\alpha^2+ b\,\alpha^3) t\Big)\,
u(x+(2a\, \alpha +3b \alpha^2)t,t),
\end{equation*}
where $$\alpha= \frac{3 \, b \, c-a(d+e)}{6\, b\, e}.$$ Then,
$\;u$ solves \eqref{IVP} if and only if $\;v$ satisfies the
equation
\begin{equation*}
\p_t v+i(a+3\alpha\, b)\p_x^2 v+b\,\p_x^3
v+i(c-\alpha(e-d))\,|v|^2 v+d\,|v|^2\p_x v+ e\,v^2\p_x \bar v=0,
\end{equation*}
and in this equation we have the factor $C_3=0$.

2. Let $c= (d-e) \, a / 3 \, b$ and $u(x,t)$ be a solution of
\eqref{IVP}. If we choose a new unknown function $v(x,t)$ related
to $u$ by the relation
\begin{equation*}
v(x,t)=\exp\Big(i\frac{a}{3b}\,x+i\frac{a^3}{27b^2}\,t\Big)\,
u(x+\frac{a^2}{3b}\,t,t).
\end{equation*}
Then, $\;u$ solves \eqref{IVP} if and only if $\;v$ satisfies the
complex modified Korteweg-de Vries type equation
\begin{equation*}
\p_t v+b\,\p_x^3 v+d\,|v|^2\p_x v+ e\,v^2\p_x \bar v=0.
\end{equation*}
\end{remark}

\bigskip
\section{The Abstract Interpolation Lemma} \label{CL}

The aim of this section is to prove an interpolation lemma with
weight concerning space time-value functions.

Let $f$ be a function from $[-T,T]$ in $H^s(\RR)$ for each $T>0$
and $s>1/2$. We suppose that for all $t \in [-T,T]$, $f$ satisfies
the following conditions:

$(i)$ For each $t \in [-T,T]$, $\, \clg{L}^1\big( \xi \in \R;
f(t,\xi) \neq 0\big)>0$, where $\mathcal{L}^1\big(E\big)$ is the
Lebesgue measure of measurable set $E \subset \R$.

 $(ii)$ There exist constants $C_0, \td{C_0} >0$,
$\td{C_1} \geq 0$ (independent of $f$, $t$), such that
\begin{eqnarray}
   \label{AIL0}
   \|f(t)\|^2_{L^2} \leq C_0 \; \|f(0)\|^2_{L^2},
%
\\[5pt]
  \label{AIL1}
   \|f(t)\|^2_{L^2(d\dot{\mu})} \leq \td{C_0} \,
   \|f(0)\|^2_{L^2(d\dot{\mu})} + \td{C_1}.
\end{eqnarray}

$(iii)$ For all $\theta \in [0,1]$, there exist $\Theta>0$
(independent of $f$, $t$) and $\gamma_1 \in (0,1)$, such that
\begin{equation}
\label{AILC3}
    \int_{\{|f(t)|^2 < \Theta\}} |f(t)|^2 \; d\dot{\mu}_\theta
    \leq \gamma_1 \int_\RR |f(t)|^2 \;
    d\dot{\mu}_\theta.
\end{equation}

$(iv)$ There exist $R>0$ and $\gamma_2 \in (0,1)$ (both
independent of $f$), such that
\begin{equation}
\label{AILC4}
    \int_{\{\RR \setminus (-R,R)\}} |f(0)|^2 \; d\dot{\mu}
    \leq \gamma_2 \int_\RR |f(0)|^2 \;
    d\dot{\mu}.
\end{equation}

We denote by $\clg{A}\,$ a set of functions that satisfies the
above conditions. The following remark shows a non-enumerable
number of non-empty sets $\clg{A}$.

\begin{remark}
Let $R_0,T>0$ be constants and $b>0$, such that for each $\theta
\in [0,1]$,\be\label{eq12} \int_{\{R_0 \le |\xi|\le R_0+b\}}
\xi^{2 \theta}d\xi \le \dfrac{1}{2(T+1)^2}\int_{\{0 \le |\xi|\le
R_0\}} \xi^{2 \theta}d\xi. \ee Let $\mathcal{A}_0^A$ the set of
the continuous functions in $\R$ such that
 $f(\xi)=0$ if $|\xi|> R_0+b$,
 $f(\xi)=A$ if $|\xi| \le R_0$ and $0\le f(\xi) \le A$, where $A$ is any positive real number, fixed.
 Now, we set
 $$
 \mathcal{A}_1^A:=\left\{f(t,\xi)=f(\xi)(1+|t|); \; t \in [-T,T], f(\xi) \in \mathcal{A}_0^A\right\}.
 $$
Then, for each $A$, $\mathcal{A}_1^A$ is a like set $\mathcal{A}$.
In fact, condition $(i)$ is clearly satisfied. The condition (ii)
is satisfied with $C_0=\tilde{C_0}=1+T$. The condition (iv) is
satisfied with $R=R_0+b$ for all $\gamma_2 \in (0,1)$, since the
first integral in \eqref{AILC4} is null. And the condition (iii)
is satisfied with $\Theta=A^2$ and $\gamma_1=1/2$, since
\eqref{eq12} implies
 \begin{align*}
 \int_{\{ |f(t)|^2< A^2\}} \xi^{2 \theta}|f(t,\xi)|^2 d\xi \le & \,(1+T)^2 A^2  \int_{\{ R_0 \le |\xi|\le R_0+b \}} \xi^{2 \theta}d\xi\\
 \le &\, \dfrac{(1+T)^2 A^2  }{2(1+T)^2}\int_{\{ 0 \le |\xi|\le R_0 \}} \xi^{2 \theta} d\xi\\
 \le & \, \dfrac12 \int_{\{ 0 \le |\xi|\le R_0 \}} \xi^{2 \theta} |f(t,\xi)|^2d\xi\\
 \le & \, \dfrac12 \int_{ \R } \xi^{2 \theta} |f(t,\xi)|^2d\xi.
 \end{align*}
\end{remark}

\begin{lemma}\label{1CL}
For each $\theta \in (0,1)$, there exists a positive constant
$\rho(\theta)$, such that, for each $t \in [-T,T]$
\be \label{AIL}
    \|f(t)\|^2_{L^2(d\dot{\mu}_\theta)} \leq \|f(t)\|^{2 \rho}_{H^s} \;
    \Big( K_0 \, \|f(0)\|^2_{L^2}
    + K_1 \, \|f(0)\|^2_{L^2(d\dot{\mu}_\theta)}
    + K_2 \Big)
\ee
for all $f \in \clg{A}$, where
$$
  K_0= C_0 \,  R^{2 \theta} \, \left(\frac{4}{\Theta}\right)^{\rho +
  1},\quad
  K_1= \frac{\tilde{C}_0}{\rho (1-\gamma_2)} \,
  \left(\frac{4}{\Theta}\right)^{\rho}, \quad
  K_2= \frac{\tilde{C}_1}{\rho R^{2 \theta \rho}}.
$$
\end{lemma}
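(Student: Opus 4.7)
My plan is to isolate the part of $\|f(t)\|_{L^2(d\dot\mu_\theta)}^2$ coming from the region where $|f(t,\xi)|^2\ge\Theta$, and then to extract the factor $\|f(t)\|_{H^s}^{2\rho}$ from that region via the Sobolev embedding $H^s\hookrightarrow L^\infty$ (valid since $s>1/2$). Condition \eqref{AILC3} immediately gives the absorption
$$(1-\gamma_1)\,\|f(t)\|_{L^2(d\dot\mu_\theta)}^2 \le \int_{\{|f(t)|^2\ge\Theta\}} |f(t,\xi)|^2\,|\xi|^{2\theta}\,d\xi,$$
and on the set $\{|f(t)|^2\ge\Theta\}$ the trivial bound $1\le (|f|^2/\Theta)^{\rho}$ together with $|f|^{2\rho}\le c_s^{2\rho}\|f(t)\|_{H^s}^{2\rho}$ produces the desired prefactor at the cost of a denominator $\Theta^{\rho}$.

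Next I would separate the integral on the right by $|\xi|\le R$ versus $|\xi|>R$. On the inner region, $|\xi|^{2\theta}\le R^{2\theta}$ reduces matters to $\|f(t)\|_{L^2}^2$, which by \eqref{AIL0} is at most $C_0\|f(0)\|_{L^2}^2$, producing the $K_0$ term. On the outer region, since $\theta<1$, the estimate $|\xi|^{2\theta}\le R^{2\theta-2}|\xi|^2$ reduces matters to $\|f(t)\|_{L^2(d\dot\mu)}^2$, which \eqref{AIL1} bounds by $\tilde C_0\|f(0)\|_{L^2(d\dot\mu)}^2+\tilde C_1$. The only norm that is not yet in the target form is $\|f(0)\|_{L^2(d\dot\mu)}^2$; for this I invoke \eqref{AILC4}: splitting $\int|f(0)|^2\,d\dot\mu$ at $|\xi|=R$, absorbing the outer part by $\gamma_2$, and using $|\xi|^{2-2\theta}\le R^{2-2\theta}$ on the inner part yields
$$(1-\gamma_2)\,\|f(0)\|_{L^2(d\dot\mu)}^2 \le R^{2-2\theta}\,\|f(0)\|_{L^2(d\dot\mu_\theta)}^2,$$
which, after combining with the two outer-region estimates, produces the $K_1$ contribution and leaves the standalone constant $\tilde C_1$ as the source of $K_2$.

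The main obstacle will be matching the precise form of the stated constants. The exponent $R^{-2\theta\rho}$ in $K_2$ agrees with my $R^{2\theta-2}$ exactly when $\rho(\theta)=(1-\theta)/\theta$, the natural candidate forced by the interpolation; this is positive for every $\theta\in(0,1)$ and coincides with the standard interpolation exponent between $L^2$ and $L^2(d\dot\mu)$. The extra factor $1/\rho$ appearing in $K_1$ and $K_2$, together with the combinations $(4/\Theta)^\rho$ and $(4/\Theta)^{\rho+1}$ in $K_1$ and $K_0$, look like the trace of a slightly sharper decomposition—likely a dyadic slicing of $\{|f|^2\ge\Theta\}$ at thresholds $4^k\Theta$, or an integral representation of the weight $|\xi|^{2\theta}$ on $|\xi|>R$—that replaces the crude splits above and absorbs the factors $(1-\gamma_1)^{-1}$ and $c_s^{2\rho}$ into the clean form of the $K_i$. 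Carrying out this bookkeeping is the only delicate point; the overall architecture of the argument (absorb with \eqref{AILC3}, extract $H^s$ via Sobolev, convert initial norms with \eqref{AIL0}--\eqref{AIL1} and \eqref{AILC4}) is dictated by the four hypotheses and is not expected to vary.
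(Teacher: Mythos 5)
Your plan is correct in substance and arrives at the right exponent $\rho(\theta)=(1-\theta)/\theta$, but it follows a genuinely different and more elementary route than the paper. The two arguments agree at the ends: both use \eqref{AILC3} to absorb the region $\{|f(t)|^2<\Theta\}$ (the paper phrases this as $I\le 4\,(I_1^{\Theta}-\theta_0 I_2^{\Theta})$ after introducing an auxiliary subtracted term $\theta_0 I_2^{\Theta}$), and both convert to data at $t=0$ via \eqref{AIL0}, \eqref{AIL1} and the inequality $(1-\gamma_2)\,\|f(0)\|^2_{L^2(d\dot{\mu})}\le R^{2-2\theta}\,\|f(0)\|^2_{L^2(d\dot{\mu}_\theta)}$, which is exactly the content of the paper's Claim $\sharp 2$. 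The middle is where you differ: you insert $1\le(|f|^2/\Theta)^{\rho}$ pointwise on $\{|f|^2\ge\Theta\}$ and cut the $\xi$-integral once at $|\xi|=R$, while the paper rewrites $I_1^{\Theta}-\theta_0I_2^{\Theta}$ by a layer-cake formula as $2\theta\int_0^\infty\eta^{2\theta-1}\mathcal{L}^1\big(E(\eta)\big)\,d\eta$ with $E(\eta)=\{|f|^{1/\theta}|\xi|>\eta\}\cap\{\kappa|\xi|<\eta\}$, bounds $\mathcal{L}^1\big(E(\eta)\big)$ by Chebyshev, and splits the $\eta$-integral at $R$; your guess that the factors $1/\rho$ and $(4/\Theta)^{\rho}$, $(4/\Theta)^{\rho+1}$ come from an integral representation of the weight is exactly right (they arise from $\int\eta^{2\theta-3}\,d\eta$ and from $\kappa^{-2}=(\theta_0\Theta)^{-1/\theta}$ with $\theta_0>1/4$). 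The price of your shortcut is that you obtain the lemma with constants of the same dependence but not with the exact values printed in the statement: each of your $K_i$ carries an extra $c_s^{2\rho}(1-\gamma_1)^{-1}$ (Sobolev and absorption constants) and a uniform $\Theta^{-\rho}$ in place of the paper's $1/\rho$ and $(4/\Theta)^{\rho+1}$ bookkeeping. Since Theorem \ref{t1.2} only uses that the constants are finite and depend on $\theta$, $\Theta$, $R$, $\gamma_1$, $\gamma_2$, $C_0$, $\tilde{C}_0$, $\tilde{C}_1$, this is harmless, and your constants even avoid the $1/\rho$ degeneration as $\theta\to1$. Two small cautions: the insertion of $(|f|^2/\Theta)^{\rho}$ reproduces the very integral you started from, so the final copy must be estimated through the split at $|\xi|=R$ and the bounds \eqref{AIL0}--\eqref{AIL1}, never fed back into $\|f(t)\|^2_{L^2(d\dot{\mu}_\theta)}$, on pain of circularity; and the degenerate case $\{|f(t)|^2\ge\Theta\}=\emptyset$ should be dispatched separately, where \eqref{AILC3} forces $\|f(t)\|^2_{L^2(d\dot{\mu}_\theta)}=0$.
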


\begin{proof}
1. For simplicity of notation, we sometimes write $f(t,\xi) \equiv
f(\xi)$ and $f(0,\xi) \equiv f_0(\xi)$. Let $\theta_j>0$,
$(j=0,1)$, constants independents of $t$, and for $\theta \in
[0,1]$ set
\begin{eqnarray*}
I_1^{\theta_1}:= \int_\RR |\xi|^{2 \theta} \, |f(\xi)|^2 \,
\chi_{\{|f(\xi)|^2>\theta_1\}} \; d\xi
\\
I_2^{\theta_1}:=\, \theta_1 \int_\RR |\xi|^{2 \theta} \,
\chi_{\{|f(\xi)|^2>\theta_1\}} \; d\xi,
\\
I_3^{\theta_1}:= \int_\RR |\xi|^{2 \theta} \, |f(\xi)|^2 \,
\chi_{\{|f(\xi)|^2 \leq \theta_1\}} \; d\xi,
\end{eqnarray*}
where $\chi_A$ is the characteristic function of the set $A$.
Therefore, we have
$$
  I:= \int_\RR |\xi|^{2 \theta} \,
|f(\xi)|^2  \; d\xi = I_1^{\theta_1} + I_3^{\theta_1} =
I_1^{\theta_1} - \theta_0 I_2^{\theta_1} + I_3^{\theta_1} +
\theta_0 I_2^{\theta_1}.
$$
Moreover, it is clear that $I_2^{\theta_1} < I_1^{\theta_1}$,
indeed
$$
  I_1^{\theta_1} - I_2^{\theta_1} =  \int_\RR |\xi|^{2 \theta} \, \big(
  |f(\xi)|^2 - \theta_1 \big) \, \chi_{\{|f(\xi)|^2 >
  \theta_1\}}
\; d\xi >0.
$$
Hence, $\theta_0 I_2^{\theta_1} < \theta_0 I_1^{\theta_1} <
\theta_0 \, (I_1^{\theta_1}+I_3^{\theta_1}) = \theta_0 \, I$.
Therefore, we have
\begin{equation}
\label{I11}
      (1 - \theta_0) \, I < I - \theta_0 I_2^{\theta_1} =
      I_1^{\theta_1} - \theta_0 I_2^{\theta_1}+ I_3^{\theta_1}.
\end{equation}

2. Claim $\sharp 1$: There exist $\theta_1>0$ independent of $f$,
$t \in [-T,T]$, and a positive constant $\beta <1$, such that
$I_3^{\theta_1} < \beta I_1^{\theta_1}$.

Proof of Claim $\sharp 1$: We must show that
$$
  \begin{aligned}
  \int_\RR |\xi|^{2 \theta} \, |f(t,\xi)|^2 \,
  \chi_{\{|f|^2 \leq \theta_1\}} \; d\xi
  &\leq \beta \,
  \int_\RR |\xi|^{2 \theta} \,
|f(t,\xi)|^2 \, \chi_{\{|f|^2 > \theta_1\}} \; d\xi
\\
  &= \beta \,
  \int_\RR |\xi|^{2 \theta} \,
|f(t,\xi)|^2 \; d\xi
\\
  &- \beta \,
  \int_\RR |\xi|^{2 \theta} \,
|f(t,\xi)|^2 \, \chi_{\{|f|^2 \leq \theta_1\}} \; d\xi.
\end{aligned}
$$
Therefore, it is enough to show that
$$
  \begin{aligned}
    \int_\RR |\xi|^{2 \theta} \, |f(t,\xi)|^2 \,
  \chi_{\{|f|^2 \leq \theta_1\}} \; d\xi
  \leq \frac{\beta}{1 + \beta} \; \int_\RR |\xi|^{2 \theta} \,
|f(t,\xi)|^2 \; d\xi,
\end{aligned}
$$
which is satisfied since $f \in \clg{A}$. Consequently, we take
$\theta_1= \Theta$ of inequality \eqref{AILC3}.


\bigskip
3. Now from item 2, we are going to show the existence of a
positive constant $\alpha < 1/2$, such that
\be \label{I12}
    I_3^{\theta_1} < \alpha (I_1^{\theta_1} + I_3^{\theta_1}) = \alpha I.
\ee
Indeed, we have
$$
\begin{aligned}
      I_3^{\theta_1} < \alpha (I_1^{\theta_1} + I_3^{\theta_1})
      & \Leftrightarrow (1-\alpha) I_3^{\theta_1} < \alpha \, I_1^{\theta_1}   \\
      & \Leftrightarrow
      I_3^{\theta_1} < \frac{\alpha}{1-\alpha} \, I_1^{\theta_1}.
\end{aligned}
$$
Therefore, it is enough to take $\beta < 1$ and, we have
$$
  0 < \alpha = \frac{\beta}{1+\beta} < \frac{1}{2}.
$$
Now, we fix $\theta_0 = (3/4 - \alpha)>1/4$ and, from \eqref{I11},
\eqref{I12}, we obtain
\be \label{I13} I < \frac{I_1^{\theta_1} - \theta_0
I_2^{\theta_1}}{1 - (\theta_0 + \alpha)}=4\left(I_1^{\theta_1} -
\theta_0 I_2^{\theta_1}\right). \ee

\bigskip
4. Claim $\sharp 2$: There exist $N_1 \in \NN$ and a constant
$C_1>0$ both independent of $f$ and $t$, such that, for all $\eta
\geq N_1$
$$
  \int_{\{ |\xi| < \eta\}} |f(\xi)|^2 |\xi|^2 \; d\xi \leq C_1
  \int_{\{ |\xi| < \eta\}} |f_0(\xi)|^2 |\xi|^2 \; d\xi+\td{C_1}.
$$

Proof of Claim $\sharp 2$: Equivalently, we have to show that
$$
  \begin{aligned}
  \int_\RR |f(\xi)|^2 |\xi|^2 \; d\xi &- \int_{\{ |\xi| \geq \eta\}} |f(\xi)|^2 |\xi|^2 \; d\xi
  \\
  &\leq C_1 \int_\RR |f_0(\xi)|^2 |\xi|^2 \; d\xi -
  C_1 \int_{\{|\xi| \geq \eta\}} |f_0(\xi)|^2 |\xi|^2 \;
  d\xi+\td{C_1},
  \end{aligned}
$$
for each $\eta \geq N_1$. Hence using \eqref{AIL1} and supposing
$C_1
> \td{C_0}$, it is sufficient to prove that
$$
  \begin{aligned}
  \td{C_1} + \td{C_0} \int_\RR |f_0(\xi)|^2 |\xi|^2 \; d\xi &-
  \int_{\{|\xi| \geq \eta\}} |f(\xi)|^2 |\xi|^2 \; d\xi
  \\
  &\leq C_1 \int_\RR |f_0(\xi)|^2 |\xi|^2 \; d\xi -
  C_1 \int_{\{|\xi| \geq \eta\}} |f_0(\xi)|^2 |\xi|^2 \;
  d\xi + \td{C_1}.
  \end{aligned}
$$
By a simple algebraic manipulation, it is sufficient to show that
$$
  \begin{aligned}
   C_1 \int_{\{ |\xi| \geq \eta\}} |f_0(\xi)|^2 |\xi|^2 \;
  d\xi &\leq \big( C_1-\td{C_0} \big) \int_\RR |f_0(\xi)|^2 |\xi|^2 \;
  d\xi
  \\
  &+ \int_{\{|\xi| \geq \eta\}} |f(\xi)|^2 |\xi|^2 \;
  d\xi.
  \end{aligned}
$$
Therefore, it is enough to show that
$$
  \int_{\{|\xi| \geq \eta\}} |f_0(\xi)|^2 |\xi|^2 \;
  d\xi \leq \frac{C_1-\td{C_0}}{C_1} \int_\RR |f_0(\xi)|^2 |\xi|^2 \;
  d\xi,
$$
which it is true for $f \in \clg{A}$. Consequently, we take $N_1=
R$ of inequality \eqref{AILC4}.

\bigskip
5. Finally, we estimate $I_1^{\theta_1} - \theta_0
I_2^{\theta_1}$.
\\
If $\theta=0,1$ by \eqref{AIL0} and \eqref{AIL1} is obvious that
$$
I_1^{\theta_1} - \theta_0 I_2^{\theta_1} \le C_0 \int_\RR |\xi|^{2
\theta} \,
      |f_0(\xi)|^{2 \theta} \; d\xi.
$$
Consequently, we consider in the following $\theta \in (0,1)$.
$$
\begin{aligned}
      I_1^{\theta_1} - \theta_0 I_2^{\theta_1}& = \int_\RR \big(|\xi|^{2 \theta} \,
      |f(\xi)|^2 - \theta_0 \theta_1 |\xi|^{2 \theta} \big) \,
\chi_{\{|f(\xi)|^2>\theta_1\}} \; d\xi
\\
      &= \int_\RR \big(\,(\,|\xi| \,
      |f(\xi)|^{1/\theta} \,)^{2 \theta} - ((\theta_0 \theta_1)^{1/2 \theta} |\xi| \,)^{2 \theta} \big) \,
\chi_{\{|f(\xi)|^2>\theta_1\}} \; d\xi.
\end{aligned}
$$
For each $\eta>0$, let $\varphi(\eta) = \eta^{2 \theta}$. Hence,
$\varphi'(\eta)= 2 \, \theta \; \eta^{2 \theta-1}> 0$ and, we have
$$
\begin{aligned}
      I_1^{\theta_1} - \theta_0 I_2^{\theta_1}& = \int_\RR \int_{(\theta_0 \theta_1)^{1/2 \theta}
      |\xi|}^{|\xi| \, |f(\xi)|^{1/\theta}}
      \varphi'(\eta) \,  d\eta \; d\xi
\\
      &= 2 \theta \int_0^\infty \eta^{2 \theta-1} \int_\RR
\chi_{E(\eta)}(\xi) \; d\xi \, d\eta
\\
      &= 2 \theta \int_0^\infty \eta^{2 \theta-1} \; \mathcal{L}^1\big((E(\eta)\big) \,
      d\eta,
\end{aligned}
$$
where
$$
  E(\eta)= \big\{ \xi \in \RR \,/\,
  |f(\xi)|^{1/\theta} |\xi| > \eta \, \big\} \bigcap
  \big\{ \xi \in \RR \,/\, \, \kappa \, |\xi| < \eta, \; \kappa = (\theta_0 \theta_1)^{1/2 \theta}
  \big\}.
$$
We observe that,
$$
\begin{aligned}
  \mathcal{L}^1\big((E(\eta)\big) &\leq \int_{\{\kappa \, |\xi| < \eta\}}
  \frac{|f(\xi)|^{2/\theta} |\xi|^{2}}{\eta^{2}}
  \; d\xi.
\end{aligned}
$$
%
%
%
Hence we obtain
$$
\begin{aligned}
      I_1^{\theta_1} - \theta_0 I_2^{\theta_1}& \leq  2 \theta
      \int_0^\infty \eta^{2 \theta-1}
      \int_{\{\kappa \, |\xi| < \eta\}} \frac{|f(\xi)|^{2/\theta} \, |\xi|^{2}}{\eta^2} \; d\xi \, d\eta
\\
      &\leq 2 \theta \; \|f\|_{H^s}^{(2/\theta) - 2} \int_0^\infty \eta^{2 \theta - 3}
      \int_{\{\kappa \, |\xi| < \eta\}} |f(\xi)|^{2} \, |\xi|^{2}  \; d\xi \,
      d\eta.
\end{aligned}
$$
From item 4 and applying \eqref{AIL0}, it follows that
$$
\begin{aligned}
      I_1^{\theta_1} - \theta_0 I_2^{\theta_1}& \le 2 \theta \; \|f(t)\|_{H^s}^{(2/\theta) - 2}
      \int_0^{N_1} \eta^{2 \theta-3}
      \int_{\{\kappa \, |\xi| < \eta \}} |f(\xi)|^{2} \, \frac{\eta^{2}}{\kappa^2} \;  \; d\xi \,
      d\eta
\\
     &+ 2 \theta \; \|f(t)\|_{H^s}^{(2/\theta) - 2}
      \int_{N_1}^\infty \eta^{2 \theta - 3}
      \int_{\{\kappa \, |\xi| < \eta\}} |f(\xi)|^{2} \, |\xi|^2 \;  \; d\xi \,
      d\eta
\\
      &\leq \frac{C_0 \;N_1^{2 \theta}}{\kappa^2} \; \|f(t)\|_{H^s}^{(2/\theta) - 2} \,
      \int_{\R} |f_0(\xi)|^{2} \; d\xi
\\
      &+ 2 \theta \; C_1 \,\|f(t)\|_{H^s}^{(2/\theta) - 2}  \,
      \int_\RR |f_0(\xi)|^{2} \, |\xi|^2
      \int_{\{ \eta > \kappa \, |\xi| \}} \eta^{2 \theta - 3}  \;  \; d\eta \,
      d\xi + \Xi
\\
      &= \frac{C_0 \;N_1^{2 \theta}}{\kappa^2} \; \|f(t)\|_{H^s}^{(2/\theta) - 2} \,
      \int_{\R} |f_0(\xi)|^{2} \;  \; d\xi
\\
      &+ \frac{\theta}{1-\theta} \; C_1 \,\|f(t)\|_{H^s}^{(2/\theta) - 2}  \,
      \int_\RR |f_0(\xi)|^{2} \, |\xi|^2
      \; |\xi|^{2 \theta - 2} \kappa^{2 \theta - 2} \,
      d\xi + \Xi
\\[5pt]
      &= \left(\frac{\;4 }{\theta_1}\right)^{1/\theta} C_0 \; N_1^{2\theta} \;
      \|f(t)\|_{H^s}^{(2/\theta) - 2} \,
      \int_{\R} |f_0(\xi)|^{2} \; d\xi+
\\
      &+ \left(\frac{4}{\theta_1}\right)^{(1-\theta)/\theta}\, \frac{C_1 \; \theta}{1 - \theta}
      \, \|f(t)\|_{H^s}^{(2/\theta) - 2} \,
      \int_\RR |f_0(\xi)|^{2}
      \; |\xi|^{2\theta} \,
      d\xi + \Xi,
\end{aligned}
$$
where
$$
  \Xi= \dfrac{\theta \td{C_1}\|f(t)\|_{H^s}^{(2/\theta) - 2}}{(1-\theta)N_1^{2(1-
      \theta)}}.
$$
\end{proof}
%
%

\section{ Statement of the well-posedness result}

This is the section where the well-posedness of the Cauchy problem
\eqref{IVP} in weighted Sobolev space $\clg{X}^{2,\theta}$, for
$\theta \in [0,1]$ is proved.

\subsection{A priori estimates} \label{CL0}

\begin{lemma} \label{LPE} If $u(t)$ is a solution of the IVP \eqref{IVP} with $u(0)$ in $H^2$,
then for each $T>0$,
\beq \label{CQ1}
  \|u(t)\|_{L^2}= \|u(0)\|_{L^2},
\enq
\begin{align} \label{CQ2}
  \|u_x(t)\|_{L^2} \le  2\|u_x(0)\|_{L^2} +  \,C\, |C_2/C_1| \,
  \|u(0)\|_{L^2}^3
  +2|C_3/C_1|\, \|u(0)\|_{L^2},
\end{align}
for all $t \in [-T,T]$. Moreover, we have
\beq \label{CQ3}
   \|u_{xx}(t)\|_{L^2}^2 \le
   \big(\|u_{xx}(0)\|^2_{L^2} + \hbar\big) \, (1+T) \, e^{\hbar T},
\enq
where  $\hbar = \hbar(\|u_0\|_{L^2}, \|u_{0x}\|_{L^2})$ is a
continuous function with $\hbar(0,0)=0$.
\end{lemma}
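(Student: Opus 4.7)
The estimate \eqref{CQ1} is immediate from the conservation of $I_1$ stated in \eqref{CL1}. For \eqref{CQ2}, the plan is to exploit the conservation of $I_2$ from \eqref{CL2}: since $I_2(u(t))=I_2(u(0))$, I get
\[
  |C_1|\,\|u_x(t)\|_{L^2}^2 \leq |I_2(u(0))| + |C_2|\,\|u(t)\|_{L^4}^4 + |C_3|\,\|u(t)\|_{L^2}\,\|u_x(t)\|_{L^2}.
\]
The Gagliardo--Nirenberg inequality $\|u(t)\|_{L^4}^4 \leq C\,\|u_x(t)\|_{L^2}\,\|u(t)\|_{L^2}^3$ together with Young's inequality absorbs the $\|u_x(t)\|_{L^2}$ factor into the left-hand side. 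Using \eqref{CQ1} to replace $\|u(t)\|_{L^2}$ by $\|u(0)\|_{L^2}$ and estimating $I_2(u(0))$ directly in terms of $\|u_x(0)\|_{L^2}^2$ and powers of $\|u(0)\|_{L^2}$ yields \eqref{CQ2} after taking square roots and applying $\sqrt{a+b+c}\leq\sqrt{a}+\sqrt{b}+\sqrt{c}$.

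For \eqref{CQ3}, the plan is a differential energy estimate at the $H^2$ level. Differentiating the equation twice in $x$, multiplying by $\overline{u_{xx}}$, integrating over $\RR$, and taking the real part, the linear dispersive contributions $ia\,u_{xxxx}$ and $b\,u_{xxxxx}$ produce, after integration by parts, either purely imaginary integrals or total derivatives, and hence drop out. What remains is
\[
  \tfrac{1}{2}\,\frac{d}{dt}\|u_{xx}(t)\|_{L^2}^2 = -\mathrm{Re}\!\int_\RR \bigl[ic\,(|u|^2u)_{xx} + d\,(|u|^2 u_x)_{xx} + e\,(u^2\overline{u_x})_{xx}\bigr]\,\overline{u_{xx}}\,dx.
\]
Expanding by Leibniz and using the Sobolev embedding $H^1\hookrightarrow L^\infty$ together with the $H^1$ bound already proved in \eqref{CQ2}, every resulting trilinear term can be dominated by $\hbar\bigl(\|u_{xx}(t)\|_{L^2}^2+1\bigr)$ for a continuous function $\hbar$ of $\|u_0\|_{L^2}$ and $\|u_{0x}\|_{L^2}$ vanishing at the origin. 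Gr\"onwall's inequality applied over $[-T,T]$ then delivers \eqref{CQ3}.

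The main obstacle is controlling the top-order piece in the nonlinear estimates for \eqref{CQ3}. The Leibniz expansion of $(|u|^2 u_x)_{xx}$ and $(u^2\overline{u_x})_{xx}$ produces terms containing $|u|^2 u_{xxx}$ and $u^2\overline{u_{xxx}}$; their pairing with $\overline{u_{xx}}$ cannot be bounded directly by $\|u_{xx}\|_{L^2}^2$. These terms must be integrated by parts once more to shift one derivative off $u_{xxx}$, after which the integrand is polynomial in $u$, $u_x$, $u_{xx}$ only, and the $L^\infty$ estimates on $u$ and $u_x$ coming from \eqref{CQ1}--\eqref{CQ2} close the bound. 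This structural cancellation --- that after the extra integration by parts no genuine $\|u_{xxx}\|_{L^2}$ survives on the right-hand side --- is the point requiring the most care, and explains why \eqref{CQ3} carries the Gr\"onwall factor $(1+T)\,e^{\hbar T}$ rather than being a true conservation identity.
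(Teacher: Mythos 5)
Your treatment of \eqref{CQ1} and \eqref{CQ2} is correct and matches what the paper intends: the paper's own proof of this lemma is a two-line deferral to the conservation laws \eqref{CL1}--\eqref{CL2}, the Gagliardo--Nirenberg inequality and the references \cite{Cv4}, \cite{C2}, and your conservation-plus-Young computation is exactly the argument behind that citation.

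The problem is \eqref{CQ3}: your energy estimate does not close. After the extra integration by parts you describe, the dangerous contributions are of the form $\int_\RR (|u|^2)_x\,|u_{xx}|^2\,dx$ and $\mathrm{Re}\int_\RR u\,u_x\,(\overline{u_{xx}})^2\,dx$, and you propose to bound them using ``$L^\infty$ estimates on $u$ and $u_x$ coming from \eqref{CQ1}--\eqref{CQ2}''. But \eqref{CQ1}--\eqref{CQ2} control only the $H^1$ norm, which yields $\|u\|_{L^\infty}$ but \emph{not} $\|u_x\|_{L^\infty}$: by Gagliardo--Nirenberg $\|u_x\|_{L^\infty}\lesssim\|u_x\|_{L^2}^{1/2}\|u_{xx}\|_{L^2}^{1/2}$, so the best available bound is
\[
\frac{d}{dt}\|u_{xx}(t)\|_{L^2}^2\;\lesssim\;\hbar\,\big(\|u_{xx}(t)\|_{L^2}^{5/2}+\|u_{xx}(t)\|_{L^2}^{2}+1\big),
\]
a superlinear differential inequality whose solutions may blow up in finite time; Gr\"onwall then gives only a local-in-time bound, not \eqref{CQ3} for arbitrary $T$. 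Nor is there an algebraic cancellation rescuing the argument: the $d$- and $e$-contributions combine to $-\tfrac{3d}{2}\int_\RR(|u|^2)_x|u_{xx}|^2\,dx$ and $-3e\,\mathrm{Re}\int_\RR u\,u_x(\overline{u_{xx}})^2\,dx$, which do not cancel for general $d,e$. The mechanism actually used in Laurey \cite{C2}, to which the paper defers, is different: one proves local well-posedness in $H^2$ on a time interval of length $\delta$ depending only on the $H^1$ norm, using the local smoothing estimate \eqref{4tt} and the maximal function estimate \eqref{efm} to absorb the derivative nonlinearities through mixed norms such as $\|u_x\|_{L^\infty_xL^2_T}$ and $\|u\|_{L^4_xL^\infty_T}$ rather than a pointwise-in-time $\|u_x\|_{L^\infty}$ bound; one then iterates over $O(T/\delta)$ intervals using the global $H^1$ control from \eqref{CQ1}--\eqref{CQ2}, and the factor $(1+T)e^{\hbar T}$ in \eqref{CQ3} is the price of stacking these local intervals. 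To repair your proof you must either run that iteration or construct a modified energy functional whose lower-order corrections cancel the $\int(|u|^2)_x|u_{xx}|^2$-type terms.
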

\begin{proof}
The inequalities \eqref{CQ1} and \eqref{CQ2} are consequence of
conserved laws \eqref{CL1}, \eqref{CL2} and the Gagliardo-
Nirenberg inequality $\|v\|_{L^4} \le
\|v\|_{L^2}^{3/4}\|v_x\|_{L^2}^{1/4}$, see \cite{Cv4} and
\cite{C2}. For inequality \eqref{CQ3}, we address \cite{C2}.
\end{proof}

\begin{proposition}\label{xav1}
Let $u$ be a solution of the IVP \eqref{IVP}. If $u(t) \in
\mathcal{X}^{s,1}$ for each $t \in [-T,T]$, with $s \ge 3$, then
%
\begin{equation*} 
    \|u(t)\|^2_{L^2(d\dot{\mu})} \le  \, e^{ a_0 T}\,
    (\|u_0\|^2_{L^2(d\dot{\mu})}+a_0 \hbar_0 T),
\end{equation*}
for all $t \in [-T,T]$, where $a_0:= 2 |a|+ 3 |b|+ (|d+e|)/2$ and
$$
  \hbar_0= \hbar_0(\|u(0)\|_{L^2},\|u_x(0)\|_{L^2},\|u_{xx}(0)\|_{L^2})
$$
is a continuous function with $\hbar_0(0,0,0)=0$.
\end{proposition}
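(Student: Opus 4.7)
The plan is to run a direct weighted $L^2$-energy estimate at the endpoint $\theta=1$. Starting from $\tfrac{d}{dt}\|xu(t)\|_{L^2}^2=\int_{\R} x^2\,\pa_t|u|^2\,dx$, I would expand $\pa_t|u|^2=u_t\bar u+u\bar u_t$ using \eqref{IVP}. The cubic contribution $-ic|u|^2u$ is purely imaginary against $\bar u$ and cancels between $u_t\bar u$ and $u\bar u_t$, while the other four terms rewrite as pure $x$-derivatives via $-ia(u_{xx}\bar u-u\bar u_{xx})=\pa_x(2a\,\mathrm{Im}(u_x\bar u))$, $-b(u_{xxx}\bar u+u\bar u_{xxx})=-b[(|u|^2)_{xxx}-3(|u_x|^2)_x]$, and $-d|u|^2(|u|^2)_x-e|u|^2(|u|^2)_x=-\tfrac{d+e}{2}(|u|^4)_x$. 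Multiplying by $x^2$ and integrating by parts in $x$ (boundary terms vanish by density of Schwartz functions in $\mathcal X^{s,1}$ with $s\ge 3$), the pure $(|u|^2)_{xxx}$ piece contributes zero, leaving
\[
\tfrac{d}{dt}\|xu\|^2 \,=\, -4a\!\int_{\R}\! x\,\mathrm{Im}(u_x\bar u)\,dx \,-\, 6b\!\int_{\R}\! x|u_x|^2\,dx \,+\, (d+e)\!\int_{\R}\! x|u|^4\,dx.
\]

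I would then estimate each of these three integrals by $\alpha_j\|xu\|^2$ plus a remainder in $H^2$-norms, with the $\alpha_j$ summing exactly to $a_0$. For the $a$-integral, Cauchy--Schwarz and Young's inequality give the bound $2|a|\|xu\|^2+2|a|\|u_x\|^2$. For the $(d+e)$-integral I would use the pointwise estimate $|x||u|^4\le\|u\|_{L^\infty}^2|x||u|^2$ together with $\int|x||u|^2\,dx\le\|xu\|\|u\|$ and Young to obtain $\tfrac{|d+e|}{2}\|xu\|^2+\tfrac{|d+e|}{2}\|u\|_{L^\infty}^4\|u\|^2$.

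The main obstacle is the $b$-integral $\int x|u_x|^2\,dx$, since a naive Cauchy--Schwarz forces control of $\|xu_x\|_{L^2}$, a quantity not at our disposal in $\mathcal X^{s,1}$. The crucial observation is the identity
\[
\int_{\R} x|u_x|^2\,dx \,=\, -\mathrm{Re}\!\int_{\R} x\,u_{xx}\,\bar u\,dx,
\]
obtained by a single integration by parts of the $\bar u_x$-factor: the resulting extra term $-\mathrm{Re}\int u_x\bar u\,dx=-\tfrac12\int(|u|^2)_x\,dx$ vanishes, and the boundary term vanishes by Schwartz density. Cauchy--Schwarz and Young then deliver $\bigl|6b\!\int x|u_x|^2\,dx\bigr|\le 6|b|\|u_{xx}\|\|xu\|\le 3|b|\|xu\|^2+3|b|\|u_{xx}\|^2$, dumping the awkward $\|u_{xx}\|^2$ into the remainder.

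Summing the three bounds yields $\tfrac{d}{dt}\|xu\|^2\le a_0\|xu\|^2+a_0\hbar_0$ with $a_0\hbar_0:=2|a|\|u_x(t)\|^2+3|b|\|u_{xx}(t)\|^2+\tfrac{|d+e|}{2}\|u(t)\|_{L^\infty}^4\|u(t)\|^2$. By Lemma \ref{LPE} together with the Sobolev embedding $\|u\|_{L^\infty}\lesssim\|u\|_{H^1}$, the right-hand side is controlled uniformly on $[-T,T]$ by a continuous function of $(\|u_0\|,\|u_{0x}\|,\|u_{0xx}\|)$ vanishing at the origin, which yields the required structure of $\hbar_0$. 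A routine Gronwall argument, combined with $e^{a_0 t}-1\le a_0 t\,e^{a_0 t}$ (and the $t\mapsto -t$ symmetry of the problem for negative times), finally gives $\|xu(t)\|^2\le e^{a_0 T}(\|u_0\|^2_{L^2(d\dot\mu)}+a_0\hbar_0\,T)$, as claimed.
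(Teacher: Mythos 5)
Your proposal is correct and follows essentially the same route as the paper: a weighted $L^2$ energy identity obtained from the equation, the same integrations by parts (in particular trading $\int_{\R} x|u_x|^2\,dx$ for $-\mathrm{Re}\int_{\R} x\,u_{xx}\bar u\,dx$, which is exactly the paper's term $3b\,\mathrm{Re}\int\vp_n'\bar u_{xx}u\,dx$), the same Young-inequality splitting giving the coefficient $a_0=2|a|+3|b|+|d+e|/2$ in front of $\|xu\|_{L^2}^2$, then Gronwall and a time-reversal (rescaling $t\mapsto k_0 t$ with $k_0=-1$) for negative times. The one point where the paper is more careful is that it works with bounded truncated weights $\vp_n$ (equal to $x^2$ for $|x|\le n$ and constant for large $|x|$) and passes to the limit $n\to\infty$ by dominated convergence, which is what actually licenses the integrations by parts against the unbounded weight; your appeal to ``density of Schwartz functions'' should be replaced by, or expanded into, such a truncation argument, since a priori quantities like $\int_{\R} x|u_x|^2\,dx$ are not known to be finite for $u(t)\in\mathcal{X}^{s,1}$ before the identity is established.
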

\begin{proof}
1. First, let us consider a convenient function, i.e. $\vp_n \in
C^\infty(\R)$, $\vp_n$ a non-negative even function, such that,
for each $x \geq 0$, $0\le \vp_n(x) \le x^2$, $0 \le \vp_n'(x) \le
2x$ and $|\vp_n^{(j)}(x)|\le 2$, $(j=2,3)$. Moreover, for $0 \le x
\le n$, $\vp_n(x)=x^2$, and for $x > 10 \, n$, $\vp_n(x) = 10 \,
n^2$.

\medskip
2. Now, multiplying the equation \eqref{IVP} by $\vp_n \bar{u}$
and taking the real part, we get after integration by parts,
\be \label{PX}
\begin{aligned}
\dfrac{\partial}{\partial t} \int_{\R} &\vp_n|u|^2 dx +2 a \, \Re
\left(i
 \int_{\R} \vp_n\bar{u} u_{xx}dx\right)
 \\
&+2b \, \Re\left( \int_{\R}\vp_n\bar{u}
 u_{xxx}dx\right)+ 2c \, \Re \left(i \int_{\R}\vp_n
 |u|^4dx\right) \\
&+  2d \, \Re\left( \int_{\R} \vp_n\bar{u}
 |u|^2u_{x}dx\right)+2e \, \Re\left( \int_{\R}\vp_n\bar{u}
 u^2\bar{u}_{x}dx\right)=0.
\end{aligned}
\ee
The term with coefficient $c$ is zero. Integrating by parts two
times the integral with coefficient $b$, we obtain
\begin{align}\label{px0}
\int_{\R}\vp_n\bar{u} u_{xxx}dx=\int_{\R}\vp_n\bar{u}_{xx}
u_{x}dx+2\int_{\R}\vp_n'\bar{u}_x u_{x}dx+\int_{\R}\vp_n''
u_{x}\bar{u}dx.
\end{align}
Integrating by parts the first term in the right-hand side of
\eqref{px0}
\beq\label{px2} \int_{\R}\vp_n\bar{u}_{xx} u_{x}dx
=-\int_{\R}\vp_nu \bar{u}_{xxx}dx-\int_{\R}\vp_n' u
\bar{u}_{xx}dx, \enq
and integrating by parts the second term in the right-hand side of
\eqref{px0}, we have
\beq\label{px3} 2\int_{\R}\vp_n' \bar{u}_{x} u_x dx=- 2\int_{\R}
\vp_n'' \bar{u}_{x}u dx- 2\int_{\R} \vp_n' \bar{u}_{xx}u dx. \enq
Now, combining the equations \eqref{px0}-\eqref{px3}, we get
\begin{align*}
\int_{\R}\vp_n\bar{u} u_{xxx} dx=-\int_{\R}\vp_nu \bar{u}_{xxx} dx
&-3\int_{\R}\vp_n'\bar{u}_{xx} u dx \\
&-2\int_{\R}\vp_n''\bar{u}_x u dx+\int_{\R}\vp_n''u_{x}\bar{u} dx,
\end{align*}
and thus
\begin{align}\label{PX4}
2\Re\left(\int_{\R}\vp_n\bar{u} u_{xxx}dx\right)= -3\Re
\int_{\R}\vp_n'\bar{u}_{xx} u dx-\Re\int_{\R}\vp_n''\bar{u}_x u
dx.
\end{align}
Integrating by parts the integral with coefficient $a$ in
\eqref{PX}
\beqs
\int_{\R}\vp_n  u_{xx}\bar{u}dx=- \int_{\R} \vp_n
\bar{u}_{x}u_{x}dx- \int_{\R} \vp_n' \bar{u}u_{x} dx. \enqs
Therefore, \beq \label{px4}
   \Re \left(i  \int_{\R} \vp_n\bar{u}
   u_{xx}dx\right)= \Im \left(\int_{\R} \vp_n' \bar{u}u_{x}dx\right).
\enq
Now, we consider the integral with coefficient $d$ in \eqref{PX}
and integrating by parts, we have
\beqs \int_{\R} \vp_n\bar{u} |u|^2u_{x}dx=- \int_{\R} \vp_n
u\bar{u}^2 u_{x}dx-2\int_{\R} \vp_nu^2\bar{u}
\bar{u}_{x}dx-\int_{\R} \vp_n'  u^2\bar{u}^2dx, \enqs
and this inequality implies \beq \label{px5}
  \Re\left(\int_{\R}
  \vp_n\bar{u} |u|^2u_{x}dx\right)= - \frac14 \int_{\R} \vp_n'  |u|^4dx.
\enq
Finally, we consider the last term, that is, with coefficient $e$
\beq \label{px6}
  \Re\left(\int_{\R} \vp_n u^2\bar{u}
  \bar{u}_{x}dx\right)= \Re\left(\int_{\R} \vp_n \bar{u} |u|^2
  {u}_{x}dx\right)=-\frac14 \int_{\R} \vp_n' |u|^4dx.
\enq

3. From \eqref{PX} and \eqref{PX4}-\eqref{px6}, we obtain
\begin{align*}
\dfrac{\partial}{\partial t} \int_{\R} \vp_n|u|^2dx=& -2 a \, \Im
\left(\int_{\R} \vp_n' \bar{u}u_{x}dx\right)
+3b \, \Re \int_{\R}\vp_n'\bar{u}_{xx} u dx\nonumber \\
& + b \, \Re\int_{\R}\vp_n''\bar{u}_x u dx+
\dfrac{(d+e)}{2}\int_{\R} \vp_n'  |u|^4 dx,
\end{align*}
and using the properties of $\vp_n$, we get
\begin{align*}
\dfrac{\partial}{\partial t} \int_{\R} \vp_n|u|^2dx\le & 2 |a| \int_{\R} x^2 |u|^2 dx +2 |a|\int_{\R}|u_{x}|^2 dx+3|b| \int_{\R} x^2 |u|^2 dx\nonumber\\
& +3|b|\int_{\R}|u_{xx}|^2 dx+ |b|\int_{\R}|u|^2 dx + |b|\int_{\R}|u_{x}|^2 dx\nonumber\\
&+\dfrac{|d+e|}{2}\int_{\R} x^2  |u|^2
dx+\dfrac{|d+e|}{2}\int_{\R} |u|^6 dx.
\end{align*}
Now, passing to the limit as $n \to \infty$ and applying the
Dominated Convergence Theorem
$$
\begin{aligned}
    \dfrac{\partial}{\partial t} \|u(t)\|^2_{L^2(d\dot{\mu})}
    &\leq a_0 \; \big( \|u(t)\|^2_{L^2(d\dot{\mu})} + A \big),
%
\end{aligned}
$$
where $A= \|u(0)\|^2_{L^2} + (1+\|u(0)\|^4_{L^2}) \sup_{t \in \RR}
\|u_x(t)\|^2_{L^2} + \sup_{t \in \RR} \|u_{xx}(t)\|^2_{L^2}$.
Observe that by \eqref{CQ2} and \eqref{CQ3}
$$
A \le
\hbar_0(\|u(0)\|_{L^2},\|u_x(0)\|_{L^2},\|u_{xx}(0)\|_{L^2})<
\infty,
$$
where $\hbar_0$ is a continuous function with $\hbar_0(0,0,0)=0$.
Now, applying Gronwall's inequality, we have for all $t \in [0,T]$
\begin{align}
\label{px8}
  \|u(t)\|^2_{L^2(d\dot{\mu})} \leq e^{a_0 T} \Big( \|u(0)\|^2_{L^2(d\dot{\mu})}
  + a_0 \; T \, A \Big).
%
%
%
\end{align}

\medskip
4. Let $k_0$ be a non-zero real parameter and set
$\tilde{u}(x,t):= u(x,k_0 \, t)$. Since $u$ is a solution of
\eqref{IVP} for every $t \in \R$, then $\tilde{u}$ is a global
solution of the following Airy-Schr\"odinger equation
\begin{equation*}
\partial_t \tilde{u}+ \, i \, \tilde{a} \, \partial^2_x \tilde{u} + \tilde{b} \,
\partial^3_x \tilde{u}
+i \, \tilde{c} \, |\tilde{u}|^{2} \tilde{u} + \tilde{d} \,
|\tilde{u}|^2
\partial_x \tilde{u}
+ \tilde{e} \, \tilde{u}^2 \partial_x \bar{\tilde{u}}=  0,
\end{equation*}
where $\tilde{a} = k_0 \, a, \ldots, \tilde{e} = k_0 \, e$.
Therefore, we have an analogously inequality for $\tilde{u}$, that
is
$$
    \|\tilde{u}(t)\|^2_{L^2(d\dot{\mu})} \leq e^{\tilde{a}_0 \tilde{T}} \Big( \|u(0)\|^2_{L^2(d\dot{\mu})}
  + \tilde{a}_0 \; \tilde{T} \, A \Big),
$$
for all $t \in [0,\tilde{T}]$, where $\tilde{a_0}= |k_0| \, a_0$.
Now, taking $\tilde{T}= T$ and $k_0=-1$, we obtain that the
inequality \eqref{px8} is valid for all $t \in [-T,T]$.

\end{proof}

\bigskip
\subsection{Unitary group and non-linear estimate}

We begin defining the unitary group $U(t)$ as the solution of the
linear initial value problem associated to \eqref{IVP},
\begin{equation*}
\begin{cases}
\partial_t u + i a \,\partial_x^2 \, u+   b \,\partial_x^3 u=0,\\
u(x,0)  = \,u_0(x).
\end{cases}
\end{equation*}
Hence, we have
\begin{align}\label{grupoU}
\widehat{U(t)u_0}(\xi)=\exp\big( {it(a\xi^2 +b\xi^3) }\big) \,
\widehat{u_0}(\xi).
\end{align}
For convenience, we define the non-linear part of equation in
\eqref{IVP} as
\begin{equation}
\label{NLF}
  F(u):= i c \,|u|^{2}u+d\, |u|^2 \partial_x u+e\,
u^2\partial_x\bar{u}.
\end{equation}

Next we recall a well known result, see for instance \cite{CL,
KPV1}.

If $f \in L_x^1 \mathcal{L}_t^2$, $u_0 \in \dot{H}^{1/4}$ and
$U(t)$ is the unitary group as in \eqref{grupoU}. Then, there
exists a constant $C>0$, such that
\begin{align}\label{4tt}
\left\| \partial_x \int_0^t U(t-t')f(x,t')dt'\right\|_{L_x^2} \le
C \, \|f\|_{L_x^1 \mathcal{L}_t^2}
\end{align}
and
\begin{equation}\label{efm}
\| U(t')u_0\|_{L_x^4 L_t^\infty} \leq
C \,\|u_0\|_{\dot{H}^{1/4}}.
\end{equation}
%
%
This result enable us to prove the following
\begin{proposition}\label{eq:x33k.2}
Let $u\in \mathcal{C}(\R, H^2)$ be the solution of IVP
\eqref{IVP}, then
\begin{align}\label{esmax22}
\|u\|_{L_x^4\mathcal{L}_t^{\infty}} \le & \, C \;
\|u(0)\|_{\dot{H}^{1/4}}+ C \; \int_{0}^{t}(\|u(t')\,\|_{H^{1/2+}}\|u(t')\,\|_{H^{2}}^2\nonumber\\
& +\|u(t')\,\|_{H^{1/2+}}^2\|u(t')\,\|_{H^{2}})dt',
\end{align}
where $C$ is a positive constant.
\end{proposition}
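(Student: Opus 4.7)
The plan is to represent $u$ by Duhamel's formula and then estimate each piece using the linear bound \eqref{efm}. Any solution $u \in \mathcal{C}(\R, H^2)$ of \eqref{IVP} satisfies
$$
  u(t) = U(t)\,u(0) - \int_0^t U(t-t')\,F(u(t'))\, dt',
$$
with $F(u)$ given by \eqref{NLF}. Taking the $L_x^4\mathcal{L}_t^{\infty}$-norm and applying the triangle inequality splits the bound into a linear contribution and a Duhamel (nonlinear) contribution.

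The linear contribution is handled directly by \eqref{efm},
$$
  \|U(\cdot)\,u(0)\|_{L_x^4 L_t^{\infty}} \le C\,\|u(0)\|_{\dot H^{1/4}},
$$
which matches the first term of \eqref{esmax22}. For the Duhamel contribution, the strategy is to bring the space-time norm under the $dt'$-integral. Pointwise in $x$, one has
$$
  \sup_{t}\Bigl|\int_0^t U(t-t')\,F(u(t'))(x)\, dt'\Bigr| \le \int_0^t \sup_{s\ge 0}\bigl|U(s)\,F(u(t'))(x)\bigr|\, dt',
$$
after which Minkowski's integral inequality in $L_x^4$ together with a slicewise application of \eqref{efm} to the inner supremum gives
$$
  \Bigl\|\int_0^t U(t-t')\,F(u(t'))\, dt'\Bigr\|_{L_x^4\mathcal{L}_t^{\infty}} \le C\int_0^t \|F(u(t'))\|_{\dot H^{1/4}}\, dt'.
$$

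The remaining task, which I expect to be the main obstacle, is to bound $\|F(u)\|_{\dot H^{1/4}}$ by $\|u\|_{H^{1/2+}}^{2}\|u\|_{H^{2}} + \|u\|_{H^{1/2+}}\|u\|_{H^{2}}^{2}$. For the cubic term, the fractional Leibniz rule together with the embedding $H^{1/2+}(\R)\hookrightarrow L^{\infty}(\R)$ yields $\||u|^2 u\|_{\dot H^{1/4}} \lesssim \|u\|_{H^{1/2+}}^{2}\|u\|_{H^{1/4}} \lesssim \|u\|_{H^{1/2+}}^{2}\|u\|_{H^{2}}$, with considerable room since only $H^{1/4}$ regularity is demanded of one factor. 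For the derivative nonlinearities $|u|^2 u_x$ and $u^2\bar u_x$, the Kato-Ponce product rule distributes $D^{1/4}$ among the three factors: when the fractional derivative falls on either of the $u$-factors, the other two factors are controlled in $L^{\infty}$ via $H^{1/2+}$ and the $u_x$ factor contributes $\|u\|_{H^2}$, producing $\|u\|_{H^{1/2+}}^{2}\|u\|_{H^{2}}$; when it instead lands on $u_x$ (yielding $D^{5/4}u$), the two undifferentiated factors remain in $L^{\infty}$ via the same Sobolev embedding while $\|D^{5/4}u\|_{L^2}\le \|u\|_{H^2}$ is used twice along with $\|u\|_{H^{1/2+}}$, producing $\|u\|_{H^{1/2+}}\|u\|_{H^{2}}^{2}$. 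Summing the three terms of $F$ and integrating in $t'$ yields \eqref{esmax22}; the delicate bookkeeping in these multiplication estimates, especially matching exactly the two cubic combinations on the right-hand side, is the only real difficulty of the argument.
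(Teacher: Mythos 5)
Your argument is correct and shares the paper's skeleton: Duhamel's formula, the triangle inequality, the maximal-function estimate \eqref{efm} applied slicewise via Minkowski's integral inequality to reduce the Duhamel term to $\int_0^t\|F(u(t'))\|_{\dot H^{1/4}}\,dt'$. Where you diverge is in how that quantity is estimated. You attack $\|F(u)\|_{\dot H^{1/4}}$ head-on with the fractional Leibniz (Kato--Ponce) rule, distributing $D^{1/4}$ over the three factors of each cubic term; this works (your bookkeeping is slightly loose --- e.g.\ when $D^{1/4}$ lands on $\partial_x u$ the resulting term is $\|u\|_{H^{1/2+}}^2\|u\|_{H^{5/4}}$, which is absorbed by the right-hand side either way --- but every term does land inside the two combinations $\|u\|_{H^{1/2+}}\|u\|_{H^2}^2$ and $\|u\|_{H^{1/2+}}^2\|u\|_{H^2}$). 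The paper instead uses the cruder but entirely elementary bound $\|F(u)\|_{\dot H^{1/4}}\lesssim \|F(u)\|_{L^2}+\|\partial_x F(u)\|_{L^2}$ and then applies only the classical product rule and the embeddings $H^{1/2+}\hookrightarrow L^\infty$, $\dot H^{1/4}\hookrightarrow L^4$; since $u\in H^2$ there is ample room, and no fractional product estimate is needed. Your route buys nothing extra here at the price of invoking a heavier tool, but it is sharper in principle (it would survive with less regularity on $u$) and is a legitimate proof. One cosmetic caveat shared with the paper: the pointwise step $\sup_t|\int_0^t\cdots|\le\int_0^t\sup_s|\cdots|$ leaves a free $t$ on the right of an inequality whose left side is a supremum in $t$; on a fixed interval $[0,T]$ this should be read with $T$ as the upper limit.
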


\begin{proof}
In order to prove this inequality we rely on the integral equation
form
\begin{align*}
u(t)=& \,U(t)u_0 - \int_0^{t}U(t-\tau)F(u)(\tau)d\tau,
\end{align*}
where $F(u)$ is given by \eqref{NLF}. The linear estimate
\eqref{efm} shows that if $u(0) \in H^2$ then for any $t>0$
\begin{align}\label{esmax2}
\|u\|_{L_x^4\mathcal{L}_t^{\infty}} \le C\|u(0)\|_{\dot{H}^{1/4}}+
C \int_{0}^{t}(\|F(u)\|_{L_x^2}+\|\,\partial_xF(u)\|_{L_x^2})dt'.
\end{align}
First, we estimate $\|F(u)\|_{L_x^2}$. By the immersions
$\|u(t)\|_{L_x^{\infty}} \le C\| u(t)\|_{H^{1/2+}}$ and
$\|u(t)\|_{L_x^{4}} \le  C\| u(t)\|_{\dot{H}^{1/4}}$, it follows
that
\begin{align}\label{esmax3}
\|\,|u|^2u(t')\,\|_{L_x^2}\le & \|u(t')\,\|_{L_x^{\infty}}\|u^2(t')\,\|_{L_x^2}\le C\|u(t')\,\|_{H^{1/2+}}\|u(t')\,\|_{L_x^{4}}^2\nonumber\\
\le & C\|u(t')\,\|_{H^{1/2+}}\|u(t')\,\|_{\dot{H}^{1/4}}^2,
\end{align}
and
\begin{align}\label{esmax23}
\|\,|u|^2u_x(t')\,\|_{L_x^2}\le \|u(t')\,\|_{L_x^{\infty}}^2
\|u_x(t')\,\|_{L_x^2}\le
C\|u(t')\,\|_{H^{1/2+}}^2\|u(t')\,\|_{\dot{H}^{1}}.
\end{align}
Analogously, we treat the term $u^2\partial_x\bar{u}$.

Now consider $\|\partial_xF(u)\|_{L_x^2}$, we estimate the term
$|u|^2u_x$. The estimates for the other terms are similar. Using
Leibniz rule, it is easy to see that
\begin{align}\label{esmax4}
\|\partial_x(|u|^2u_x)(t')\|_{L_x^2}\le &\, \|\bar{u} u_x^2(t')\,\|_{L_x^{2}}+\|\,|u_x|^2u(t')\,\|_{L_x^2}+\|\,|u|^2u_{xx}(t')\,\|_{L_x^2}\nonumber\\
\le &
\,C\|u(t')\,\|_{H^{1/2+}}\|u(t')\,\|_{H^{2}}^2+\|u(t')\,\|_{H^{1/2+}}^2\|u(t')\,\|_{H^{2}}.
\end{align}
Hence combining \eqref{esmax2}-\eqref{esmax4} we conclude
\eqref{esmax22}.
\end{proof}

\bigskip
\subsection{Approximated problem}

Let $X$ be a Banach space, $u_0 \in X$ and
$(u_0^{\lambda})_{\lambda>0}$ a family of regular functions, such
that
$$
  u_0^{\lambda} \to u_0 \quad \text{in $X$},
$$
when $\lambda \to \infty$. For each $\lambda>0$, we consider the
following family of approximated problems obtained from
\eqref{IVP}
\begin{equation}\label{reivaxIVP}
\begin{cases}
\partial_t u^{\lambda}+ia\,\partial^2_xu^{\lambda}+b\,\partial^3_x u^{\lambda}+ic\,|u^{\lambda}|^{2}u^{\lambda}+d\,
|u^{\lambda}|^2 \partial_x u+e\, {u^{\lambda}}^2\partial_x\bar{u}^{\lambda}=  0, \quad x,t \in \R,\\
u^{\lambda}(x,0)  =  u_0^{\lambda}(x)
.
\end{cases}
\end{equation}

As mentioned in the Introduction, we know that \eqref{IVP} is
global well-posedness in $H^2$.
In order to prove the well-posedness with weight result, Theorem
\ref{t1.2}, we initially proving the following

\begin{lemma}\label{otg}Let $T>0$, $u_0 \in H^{2}$, $u_0^{\lambda} \to u_0$
in $H^2$, and any $s \in [0,2)$ fixed. Then, for each $t \in
[-T,T]$, the family $(u^{\lambda})(t)$, solutions of the
approximated problems \eqref{reivaxIVP}, converges to $u(t)$ in
$H^{s}$, uniformly with respect to $t$, where $u(t) \in H^2$ is
the global solution of the IVP \eqref{IVP}.
\end{lemma}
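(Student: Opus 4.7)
The plan is to run an energy estimate in $L^2$ for the difference $w^\lambda := u^\lambda - u$, exploiting a uniform $H^2$ bound on both $u^\lambda$ and $u$, and then interpolate to obtain convergence in every $H^s$ with $s<2$.

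\textbf{Step 1 (uniform $H^2$ bounds).} Since $u_0^\lambda \to u_0$ in $H^2$, the sequence $\{\|u_0^\lambda\|_{H^2}\}$ is bounded. Global well-posedness of \eqref{IVP} in $H^2$ gives $u^\lambda \in \mathcal{C}(\mathbb{R};H^2)$, and Lemma \ref{LPE} (applied to each $u^\lambda$) produces a constant $M>0$, depending only on $T$ and $\sup_\lambda \|u_0^\lambda\|_{H^2}$, with
\[
\sup_{\lambda}\sup_{t\in[-T,T]}\|u^\lambda(t)\|_{H^2}\ \le\ M,
\]
and the same bound for $u$ itself.

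\textbf{Step 2 ($L^2$ energy estimate on $w^\lambda$).} Subtracting the equation for $u$ from that for $u^\lambda$,
\[
\partial_t w^\lambda + i\,a\,\partial_x^2 w^\lambda + b\,\partial_x^3 w^\lambda + i\,c\,N_1 + d\,N_2 + e\,N_3 = 0,
\]
where $N_1 = |u^\lambda|^2 u^\lambda - |u|^2 u$, $N_2 = |u^\lambda|^2 \partial_x u^\lambda - |u|^2 \partial_x u$ and $N_3 = (u^\lambda)^2 \partial_x \bar u^\lambda - u^2 \partial_x \bar u$, with data $w^\lambda(0) = u_0^\lambda - u_0$. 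Multiplying by $\bar w^\lambda$, integrating and taking real parts, the dispersive contributions drop: the Schr\"odinger term becomes $-a\,\Im\int |\partial_x w^\lambda|^2\,dx = 0$ and the Airy term is a total derivative. Writing $N_2 = |u^\lambda|^2 \partial_x w^\lambda + (|u^\lambda|^2-|u|^2)\,\partial_x u$, and analogously for $N_3$, the only apparently derivative-losing contribution is $\int |u^\lambda|^2 \partial_x w^\lambda\,\bar w^\lambda\,dx$; integrating by parts yields
\[
2\,\Re\!\int |u^\lambda|^2\,\partial_x w^\lambda\,\bar w^\lambda\,dx \;=\; -\int \partial_x(|u^\lambda|^2)\,|w^\lambda|^2\,dx,
\]
which, via $H^2\hookrightarrow W^{1,\infty}$ and Step 1, is bounded by $C(M)\,\|w^\lambda\|_{L^2}^2$. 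The remaining pieces of $N_1,N_2,N_3$ are at worst linear in $w^\lambda$ with coefficients involving $u,u^\lambda,\partial_x u,\partial_x u^\lambda$, all uniformly bounded in $L^\infty$ by $M$ via Sobolev embedding. This gives
\[
\frac{d}{dt}\|w^\lambda(t)\|_{L^2}^2 \le C(M)\,\|w^\lambda(t)\|_{L^2}^2,
\]
and Gronwall's inequality yields $\sup_{t\in[-T,T]}\|w^\lambda(t)\|_{L^2} \le e^{C(M)T/2}\,\|u_0^\lambda - u_0\|_{L^2} \to 0$ as $\lambda\to\infty$.

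\textbf{Step 3 (interpolation to $H^s$).} For $s\in[0,2)$ the standard interpolation
\[
\|w^\lambda(t)\|_{H^s} \le \|w^\lambda(t)\|_{L^2}^{\,1-s/2}\,\|w^\lambda(t)\|_{H^2}^{\,s/2}
\]
combined with the uniform $H^2$ bound of Step 1 and the uniform $L^2$ decay of Step 2 produces
\[
\sup_{t\in[-T,T]}\|u^\lambda(t)-u(t)\|_{H^s}\ \longrightarrow\ 0 \quad (\lambda\to\infty),
\]
which is exactly the claimed uniform convergence. The main obstacle is the presence of spatial derivatives inside $N_2,N_3$, which at first sight breaks the $L^2$ energy estimate; this is rescued precisely by the skew-symmetry trick above, which trades the derivative on $w^\lambda$ for one on $|u^\lambda|^2$. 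Restricting to $s<2$ (rather than $s=2$) is essential, as it is only through interpolation with the uniform $H^2$ ceiling that the quantitative $L^2$-decay yields $H^s$-decay with no control on the $H^2$ difference.
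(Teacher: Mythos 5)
Your proof is correct, but it takes a genuinely different route from the paper's. You fix the known global $H^{2}$ solution $u$, derive uniform $H^{2}$ bounds for $u^{\lambda}$ and $u$ from Lemma \ref{LPE}, and run a direct $L^{2}$ energy estimate on $w^{\lambda}=u^{\lambda}-u$, neutralizing the derivative nonlinearities $|u^{\lambda}|^{2}\partial_x w^{\lambda}$ and $(u^{\lambda})^{2}\partial_x\bar w^{\lambda}$ by the integration-by-parts identity $2\Re\int |u^{\lambda}|^{2}\partial_x w^{\lambda}\,\bar w^{\lambda}\,dx=-\int\partial_x(|u^{\lambda}|^{2})|w^{\lambda}|^{2}\,dx$ (and its analogue for the $e$-term), which is controlled via $H^{2}\hookrightarrow W^{1,\infty}$. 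The paper instead works with the Duhamel integral formulation, shows that $(u^{\lambda})$ is a \emph{Cauchy sequence} in $L^{2}$ by comparing $u^{\lambda}$ with $u^{\lambda'}$, and handles the derivative terms $d|\mu|^{2}w_x+e\,v^{2}\bar w_x$ not by skew-symmetry but by the Kato smoothing estimate \eqref{4tt} together with the maximal-function estimate \eqref{efm} through Proposition \ref{eq:x33k.2} (the $L^{4}_{x}\mathcal{L}^{\infty}_{t}$ bound); it then identifies the limit via Banach--Alaoglu and verifies $u=\mathcal{L}(u)$, so the argument simultaneously reconstructs the $H^{2}$ solution (as the paper notes in its subsequent remark). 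Your approach is more elementary --- it needs neither \eqref{4tt} nor \eqref{efm} nor Proposition \ref{eq:x33k.2} --- at the price of presupposing the global $H^{2}$ theory for $u$ and of a formal computation (pairing $w^{\lambda}_{xxx}$ with $\bar w^{\lambda}$, and differentiating $\|w^{\lambda}\|_{L^{2}}^{2}$ in time) that strictly speaking requires a brief duality or regularization justification for solutions that are only in $\mathcal{C}([-T,T];H^{2})$; this is standard but should be acknowledged. The final interpolation step to pass from $L^{2}$ to $H^{s}$, $s<2$, is identical in both arguments.
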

\begin{proof}
1. We begin proving that $(u^{\lambda})$ is a Cauchy sequence in
$L^2$. Let $\mu:=u^{\lambda}$, $v:=u^{\lambda'}$ and $w=\mu-v$, we
rely on the integral equation form
\begin{align}\label{xlem1}
\mu(t)=& \,U(t)u_0^{\lambda} -
\int_0^{t}U(t-\tau)F(\mu)(\tau)d\tau,
\end{align}
 and
\begin{align}\label{xlem2}
v(t)=& \,U(t)u_0^{\lambda'} - \int_0^{t}U(t-\tau)F(v)(\tau) d\tau,
\end{align}
where $F(u)$ is given by \eqref{NLF}. Thus \eqref{xlem1} and
\eqref{xlem2} imply
\begin{align*}
w(t)=& \,U(t)(u_0^{\lambda}-u_0^{\lambda'})-
\int_0^{t}U(t-\tau)(F(\mu)-F(v))(\tau) d\tau.
\end{align*}
Hence we have
\begin{align*}
\|w(t)\|_{L_x^2} \le &
\,\|u_0^{\lambda}-u_0^{\lambda'}\|_{L^2}+\left\|
\int_0^{t}U(t-\tau) (F(\mu)-F(v))(\tau) d\tau\right\|_{L_x^2}.
%
\end{align*}
We can suppose $a,b,c,d$ positive numbers, and using the
definition of $F(u)$
\begin{align}\label{xlem4}
\|\int_0^{t}&U(t-\tau)(F(\mu)-F(v))(\tau) d\tau \|_{L_x^2}
\le  c \int_0^{t}\left\||\mu|^2w+\mu v\overline{w}+|v|^2w\right\|_{L_x^2}d\tau  \nonumber\\
&  +\,d \int_0^{t}\left\|\mu v_x\overline{w}+w\overline{v}v_x\right\|_{L_x^2}d\tau+e \int_0^{t}\left\|w \mu\overline{\mu}_x+w v\overline{\mu}_x\right\|_{L_x^2}d\tau \nonumber\\
& +\|\int_0^{t}U(t-\tau)( d |\mu|^2w_x+ e v^2\overline{w}_x)(\tau) d\tau\|_{L_x^2}\nonumber\\
=: & \,I_{c}+I_{d}+I_{e}+I_{d e},
\end{align}
with obvious notation. Applying the H\"older inequality, the
Sobolev immersion $\|u(t)\|_{L_x^{\infty}} \le C \; \|
u(t)\|_{H^{1/2+}}$ and conservation law in $\dot{H}^1$, it follows
that
\begin{align*}
I_{c} \le & c\int_0^{t}(\|\mu\|_{L_x^{\infty}}+\|v\|_{L_x^{\infty}})^2\|w\|_{L_x^2}d\tau\le 2c\int_0^{t}(\|\mu\|_{H^1}^2+\|v\|_{H^1}^2)\|w\|_{L_x^2}d\tau\\
\le & C c (\|u_0^{\lambda}\|_{H^1}^2+\|u_0^{\lambda}\|_{L^2}^6+\|u_0^{\lambda'}\|_{H^1}^2+\|u_0^{\lambda'}\|_{L^2}^6)\int_0^{t}\|w\|_{L_x^2}d\tau\\
\le & 2C c
(\|u_0\|_{H^1}^2+\|u_0\|_{L^2}^6)\int_0^{t}\|w\|_{L_x^2}d\tau.
\end{align*}
Analogously, using H\"older inequality, the immersion
$\|u(t)\|_{L_x^{\infty}} \le C\| u(t)\|_{H^{1/2+}}$ and
conservation laws in $\dot{H}^1$ and $\dot{H}^2$, it follows that
\begin{align*}
I_{d} \le & d\int_0^{t}(\|\mu\|_{L_x^{\infty}}+\|v\|_{L_x^{\infty}})\|v_x\|_{L_x^{\infty}}\|w\|_{L_x^2}d\tau\\
 \le &d\int_0^{t}(\|\mu\|_{H^1}+\|v\|_{H^1})\|v\|_{H^2}\|w\|_{L_x^2}d\tau \\
\le & C d (\|u_0^{\lambda}\|_{H^1}+\|u_0^{\lambda}\|_{L^2}^3+\|u_0^{\lambda'}\|_{H^1}+\|u_0^{\lambda'}\|_{L^2}^3)\Omega_0\int_0^{t}\|w\|_{L_x^2}d\tau\\
\le & 2C \,d
(\|u_0\|_{H^1}+\|u_0\|_{L^2}^3)\Omega_0\int_0^{t}\|w\|_{L_x^2}d\tau,
\end{align*}
where $\Omega_0=\Omega_0(\|u_0\|_{L^2},\|u_0\|_{\dot{H}^1},
\|u_0\|_{\dot{H}^2}, T)$. Similarly we obtain
\begin{align*}
I_{e} \le & \,C \,e
(\|u_0\|_{H^1}+\|u_0\|_{L^2}^3)\Omega_0\int_0^{t}\|w\|_{L_x^2}d\tau.
\end{align*}
Now, we estimate $I_{d e}$. From \eqref{4tt}, we have
\begin{align*}
I_{d e} = &
\|\int_0^{t}U(t-\tau)\left(d(|\mu|^2w)_x-d(|\mu|^2)_xw+e
(v^2\overline{w})_x-2e v v_x \overline{w} \right)(\tau) d\tau\|_{L_x^2}\\
 \le &\| \partial_x \int_0^{t}U(t-\tau)(d|\mu|^2w+e v^2\overline{w})d\tau\|_{L_x^2}
 + \int_0^{t}\|d(|\mu|^2)_xw-2e v v_x \overline{w}\|_{L_x^2}d\tau \\
\le & C \,\|d|\mu|^2w+e v^2\overline{w} \|_{L_x^1 \mathcal{L}_t^2}
+ c \,(d+e) (\|u_0\|_{H^1}+\|u_0\|_{L^2}^3)\Omega_0\int_0^{t}\|w\|_{L_x^2}d\tau\\
\le & C \,|d|\|\mu\|_{L_x^4 \mathcal{L}_t^{\infty}}^2
\|w\|_{L_x^2 \mathcal{L}_t^{2}}+c|e| \|v\|_{L_x^4 \mathcal{L}_t^{\infty}}^2 \|w\|_{L_x^2 \mathcal{L}_t^{2}}\\
& + C \,(d+e) (\|u_0\|_{H^1}+\|u_0\|_{L^2}^3)\Omega_0\int_0^{t}\|w\|_{L_x^2}d\tau\\
\le &  C\,(d+e) \Big((\|\mu\|_{L_x^4 \mathcal{L}_t^{\infty}}^2
+\|v\|_{L_x^4\mathcal{L}_t^{\infty}}^2)\|w\|_{L_x^2\mathcal{L}_t^{2}}
\\
& +
(\|u_0\|_{H^1}+\|u_0\|_{L^2}^3)\Omega_0\int_0^{t}\|w\|_{L_x^2}d\tau
\Big).
\end{align*}
Applying Proposition \ref{eq:x33k.2}, we conclude
\begin{align*}
I_{d e} \le C \,(d+e) \left(\Omega_1\|w\|_{L_x^2
\mathcal{L}_t^{2}}+
(\|u_0\|_{H^1}+\|u_0\|_{L^2}^3)\Omega_0\int_0^{t}\|w\|_{L_x^2}d\tau\right),
\end{align*}
where $\Omega_1=\Omega_1(\|u_0\|_{L^2},\|u_0\|_{\dot{H}^1},
\|u_0\|_{\dot{H}^2}, T)$. Finally, we have
\begin{align}\label{xlem6}
\|w(t)\|_{L_x^2}\le \,\|u_0^{\lambda}-u_0^{\lambda'}\|_{L^2}+C
\Gamma_1 \int_0^t \|w(\tau)\|_{L^2}d\tau+C(d+e)\Omega_1
\|w\|_{L_x^2 \mathcal{L}_t^2},
\end{align}
where $\Gamma_1=2 C c \Gamma^2 + C(d+e)\Gamma \Omega_0$ and
$\Gamma=\|u_0\|_{H^1}+\|u_0\|_{L^2}^3$. Moreover, since
$$\int_0^t \|w(\tau)\|_{L^2}d\tau \le T^{1/2}\|w\|_{L_x^2
\mathcal{L}_t^2},$$
it follows from inequality \eqref{xlem6} that
\begin{align*}
\|w(t)\|_{L_x^2}^2\le
\,C\|u_0^{\lambda}-u_0^{\lambda'}\|_{L^2}^2+C \Gamma_1^2 T
\int_0^t \|w(\tau)\|_{L^2}^2d\tau+C\Omega_1^2 \int_0^t
\|w(\tau)\|_{L^2}^2d\tau,
\end{align*}
and using Gronwall's inequality
\begin{align*}
\|w(t)\|_{L_x^2}^2\le \,C\|u_0^{\lambda}-u_0^{\lambda'}\|_{L^2}^2
e^{Ct(\Gamma_1^2 T +\Omega_1^2)}.
\end{align*}
Consequently, $(u^{\lambda})$ is a Cauchy sequence in $L^2$, and
hence $u^{\lambda} \rightarrow u \in L^2$.

\medskip
2. Let $s \in (0,2)$, the interpolation in Sobolev spaces shows
that
\begin{align*}
\|u^{\lambda} - u^{\lambda'}\|_{H^{s}} \le & \|u^{\lambda} -
u^{\lambda'}\|_{L^2}^{1-s/2}
\|u^{\lambda} - u^{\lambda'}\|_{H^2}^{s/2}\\
 \le & \|u^{\lambda} - u^{\lambda'}\|_{L^2}^{1-s/2} \Omega_2,
\end{align*}
where $\Omega_2=\Omega_2(\|u_0\|_{L^2},\|u_0\|_{\dot{H}^1},
\|u_0\|_{\dot{H}^2}, T)$. Hence we have
\begin{equation}\label{cv1}
u^{\lambda} \rightarrow u  \quad\textrm{in} \,\,H^{s}\quad
\textrm{for all}
 \,\, s \in [0,2).
\end{equation}
Observe that the conservations laws in $L^2$ and $\dot{H}^1$ for
$u^{\lambda}$ implies that the limit $u$ also satisfies:
\begin{equation}\label{cv2}
\|u(t)\|_{L^2}=\|u(0)\|_{L^2}
\end{equation}
and
\begin{equation}\label{cv3}
\|u(t)\|_{\dot{H}^1} \le \|u(0)\|_{\dot{H}^1}+C\|u(0)\|_{L^2}^3.
\end{equation}
Moreover, the conserved quantity \eqref{CQ3} gives
\begin{align*}
\|u^{\lambda}_{xx}(t)\|_{L^2}^2 \le
   \big(\|u^{\lambda}_{xx}(0)\|^2_{L^2} + C_4\big) \, (1+T) \, e^{C_4 T},
\end{align*}
where the positive constants $C_4 = C_4(\|u^{\lambda}_0\|_{L^2},
\|u^{\lambda}_{0x}\|_{L^2})$. Thus
\begin{align}\label{clw1}
\|u^{\lambda}_{xx}(t)\|_{L^2}^2 \le C
   \big(\|u_{xx}(0)\|_{L^2},\|u_{x}(0)\|_{L^2},\|u(0)\|_{L^2},T).
\end{align}
Applying the Banach-Alaoglu Theorem, there exist a subsequence
already denoted by $u^{\lambda}$, and a function $\tilde{u} \in
H^2$, such that \beq \label{limf1} u^{\lambda}\rightharpoonup
\tilde{u}, \quad \textrm{in}\,\, H^2. \enq
Therefore, $u^{\lambda}\rightharpoonup \tilde{u}$ in $H^1$. On the
other hand, by \eqref{cv1} we have $u^{\lambda}\rightarrow u$ in
$H^1$ and thus $u^{\lambda}\rightharpoonup u$ in $H^1$.
Consequently, the uniqueness of the limit gives $u=\tilde{u} \in
H^2$. The inequality \eqref{clw1} and the limit \eqref{limf1}
implies
\begin{align}\label{clw2}
\|u_{xx}(t)\|_{L^2}= &\|\tilde{u}_{xx}(t)\|_{L^2}\le \liminf
\|u^{\lambda}_{xx}(t)\|_{L^2}^2 \nonumber \\ \le & C
   \big(\|u_{xx}(0)\|_{L^2},\|u_{x}(0)\|_{L^2},\|u(0)\|_{L^2},T).
\end{align}
%

\medskip
3. Now we will prove that $u$ is a solution of \eqref{IVP}. Using
Duhamel principle, we will show that, for each $t \in [-T,T]$,
$u(t)= \mathcal{L}(u)(t)$, where
\begin{align*}
\mathcal{L}(u)(t):= U(t)u(0,x) -
\int_0^{t}U(t-\tau)F(u)(\tau)d\tau.
\end{align*}
Let $\varpi:= \mathcal{L}(u^{\lambda})-\mathcal{L}(u)$, then
\begin{align}\label{clw0}
\|\varpi\|_{L^2} \le \|u^{\lambda}(0,x) - u(0,x)\|_{L^2}+\|
\int_0^t U(t-t')(F(u^{\lambda})-F(u))(t')dt'\|_{L^2}.
\end{align}
In the same way as in \eqref{xlem4}, we get
\begin{align}\label{1xlem4}
\|\int_0^{t}&U(t-\tau)(F(u^{\lambda})-F(u))(\tau) d\tau \|_{L_x^2}
\le  c\int_0^{t}\left\||u|^2w+uv\overline{w}+|v|^2w\right\|_{L_x^2}d\tau  \nonumber\\
&  +\,d
\int_0^{t}\left\|uv_x\overline{w}+w\overline{v}v_x+|u|^2w_x\right\|_{L_x^2}d\tau
+e \int_0^{t}\left\|w u\overline{u}_x+w v\overline{u}_x+v^2\overline{w}_x\right\|_{L_x^2}d\tau \nonumber\\
&=:  c\,I_{c}+d J_{d}+e J_{e},
\end{align}
where $v=u^{\lambda}$ and $w=u^{\lambda}-u$. We begin estimating
\begin{align}
I_{c} \le&  C(\|u\|_{L_t^{\infty}H^1}+\|v\|_{L_t^{\infty}H^1})^2\int_0^t \|w\|_{L_x^2}d\tau\nonumber\\
&\le
(\|u(0)\|_{H^1}+\|u_0^{\lambda}\|_{H^1}+\|u(0)\|_{L^2}^3+\|u_0^{\lambda}\|_{L^2}^3)^2
\int_0^t \|w\|_{L_x^2}d\tau.
\end{align}
Therefore, $I_{c} \to 0$ as $\lambda \to \infty$.  By
\eqref{CQ1}-\eqref{CQ3} and \eqref{cv1}-\eqref{cv3}
\begin{align}
J_{d} \le &  C(\|u\|_{L_t^{\infty}H^1}+\|v\|_{L_t^{\infty}H^1})\|v\|_{L_t^{\infty}H^2}\int_0^t \|w\|_{L_x^2}d\tau\nonumber\\
& +\|u\|_{L_t^{\infty}H^1}^2\int_0^t \|w_x\|_{L_x^2}d\tau.
\end{align}
Hence $J_{d} \to 0$ as $\lambda \to \infty$. Similarly by
\eqref{clw2}, we have
\begin{align}\label{clw4}
J_{e} \le&  C(\|u\|_{L_t^{\infty}H^1}+\|v\|_{L_t^{\infty}H^1})\|u\|_{L_t^{\infty}H^2}\int_0^t \|w\|_{L_x^2}d\tau\nonumber\\
& +\|v\|_{L_t^{\infty}H^1}^2\int_0^t \|w_x\|_{L_x^2}d\tau
\rightarrow 0.
\end{align}
Then, combining \eqref{cv1} and \eqref{clw0}-\eqref{clw4} and
passing to the limit as $\lambda \to \infty$
$$
\varpi
=\mathcal{L}(u^{\lambda})-\mathcal{L}(u)=u^{\lambda}-\mathcal{L}(u)
\rightarrow 0
\quad \textrm{in}\,\,L^2.
$$
The uniqueness of limit implies that $u=\mathcal{L}(u)$.
\end{proof}

\begin{remark}
{\bf 1} Observe that the proof of Lemma \ref{otg} gives another
way to prove the global well-posedness of \eqref{IVP} in $H^2$.
For instance, in order to show the persistence we proceed as
follow:

We claim that $u \in C([0,T], H^s)$, for $s \in [0,2]$. Indeed,
let $t_n \to t$ in $[0,T]$, and using the Duhamel's formula we
have
\begin{align*}
\|u(t_n)-u(t)\|_{H^2}\le& \|U(t_n)u_0-U(t)u_0\|_{H^2}\\
&+\|\int_0^{t_n}U(t_n-t')F(u)(t')dt'_0-\int_0^{t}U(t-t')F(u)(t')dt'\|_{H^2}\\
&=: L_1+L_2,
\end{align*}
with the obvious notation. In $L_1$, by the Dominated Convergence
Theorem, passing to the limit as $n \to \infty$,
\begin{align*}
L_1^2=\int_{\R}(1+\xi^2)2|e^{it_n \phi(\xi)}-e^{it \phi(\xi)}|^2
|\widehat{u}_0(\xi)|^2 d \xi \to 0,
\end{align*}
where $\phi(\xi)=a\xi^2+b\xi^3$. In $L_2$ also by Dominated
Convergence Theorem, we have
\begin{align*}
L_2 \le& \|\int_{t_n}^{t}U(t_n-t')F(u)(t')dt'_0\|_{H^2}+\|\int_0^{t}\left(U(t_n)\psi(t')dt'-U(t)\psi(t')\right)dt'\|_{H^2}\\
\le&
\|\int_{t_n}^{t}U(-t')F(u)(t')dt'_0\|_{H^2}+\|\int_0^{t}\left(U(t_n)\psi(t')dt'-U(t)\psi(t')\right)dt'\|_{H^2}\to
0,
\end{align*}
where $\psi(t')=U(-t')F(u)(t')$ and this proves that $u \in
C([0,T], H^2)$.

\bigskip
{\bf 2}\,\, In Lemma \ref{otg} we can consider $u_0^{\lambda}(x):=
\mathcal{F}^{-1}\left( \Chi_{\left\{|\xi|\le \lambda \right\}}
\widehat{u_0}(\xi)\right)(x)$ $\big($or $u_0^{\lambda}(x)=
\mathcal{F}^{-1}\left(
 \psi_{\lambda}(\xi)\widehat{u_0}(\xi)\right)(x)$ where $\psi_{\lambda}$ is a continuous function with
 support in $[-2\lambda, 2\lambda]$ and such that $\psi_{\lambda}=1$ in $[-\lambda,
 \lambda]$$\big)$. Therefore, if
$u_0 \in H^s$ then
\begin{align*}
\int |\widehat{u_0^{\lambda}}-\widehat{u_0}|^2(\xi)d\mu_{\theta}=
&\int |\Chi_{\left\{|\xi|> \lambda
\right\}}\widehat{u_0}|^2(\xi)d\mu_{\theta}
 =  \int_{|\xi|> \lambda}|\widehat{u_0}|^2(\xi)d\xi d\mu_{\theta}\rightarrow 0,
\end{align*}
when $\lambda \to \infty$. In this case, Paley-Wiener Theorem
implies that the initial data $u_0^{\lambda}$ in \eqref{reivaxIVP}
has an analytic continuation to an entire analytic $($in x$)$
function. On the other hand, by \cite{K-M} the solution
$u^{\lambda}$ of the IVP \eqref{reivaxIVP} also is an entire
analytic function. Hence we have the following :
\begin{corollary}If $u_0 \in H^2$ and $u(t)$ is the global solution of the IVP
\eqref{reivaxIVP} associated with the initial data $u_0$, then
there exists a sequence of entire analytic functions $u^{\lambda}$
such that $u^{\lambda}(t) \to u(t)$ in $H^{s}$, with $s \in
[0,2)$.
\end{corollary}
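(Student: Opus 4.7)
The plan is to use exactly the approximation scheme foreshadowed in item 2 of the preceding Remark: take the regularizations $u_0^\lambda$ to be Fourier-truncated versions of $u_0$, whose Fourier transforms have compact support. Specifically, I would set
\[
  u_0^\lambda(x) := \mathcal{F}^{-1}\bigl(\psi_\lambda(\xi)\,\widehat{u_0}(\xi)\bigr)(x),
\]
where $\psi_\lambda$ is a continuous cutoff with $\operatorname{supp}\psi_\lambda \subset [-2\lambda,2\lambda]$ and $\psi_\lambda \equiv 1$ on $[-\lambda,\lambda]$. Any such choice makes $\widehat{u_0^\lambda}$ compactly supported, which is precisely the hypothesis of the Paley--Wiener theorem.

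The first step is to verify that $u_0^\lambda \to u_0$ in $H^2$. This is immediate from the Fourier representation
\[
  \|u_0^\lambda - u_0\|_{H^2}^2 = \int_\R (1+\xi^2)^{2}\,\bigl|\psi_\lambda(\xi)-1\bigr|^{2}\,|\widehat{u_0}(\xi)|^{2}\,d\xi,
\]
combined with the Dominated Convergence Theorem, since $(\psi_\lambda-1)\widehat{u_0}\to 0$ pointwise and is bounded in modulus by $(1+|\psi_\lambda|_\infty)|\widehat{u_0}|$ uniformly in $\lambda$. Once this convergence is in hand, Lemma \ref{otg} applies verbatim: the family $u^\lambda(t)$ of solutions of the approximated problems \eqref{reivaxIVP} converges to the global $H^2$ solution $u(t)$ of \eqref{IVP} in $H^s$, uniformly on $[-T,T]$, for every $s\in[0,2)$.

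It then remains to identify $u^\lambda(t,\cdot)$ as entire analytic in $x$. By the Paley--Wiener theorem, the compact Fourier support of $u_0^\lambda$ means $u_0^\lambda$ is the restriction to $\R$ of an entire function of exponential type. Invoking \cite{K-M}, which asserts that the flow of \eqref{reivaxIVP} (equivalently, of \eqref{IVP} with smooth initial data) propagates this property, each $u^\lambda(t,\cdot)$ is entire analytic in $x$ for every $t$. Combining the two conclusions produces the sequence of entire analytic functions $u^\lambda$ with $u^\lambda(t)\to u(t)$ in $H^s$ for all $s\in[0,2)$, which is the statement of the corollary.

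There is no genuine obstacle in this proof: the heavy lifting, namely the uniform $H^s$--convergence of the approximated solutions, is already supplied by Lemma \ref{otg}, while the analyticity claim is a direct citation of Paley--Wiener together with \cite{K-M}. The only small point to be careful about is choosing a regularization that \emph{simultaneously} produces $H^2$ convergence of the data (so that Lemma \ref{otg} applies) and compact Fourier support (so that Paley--Wiener applies); the cutoff $\psi_\lambda$ above achieves both.
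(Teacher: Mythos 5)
Your proposal is correct and follows essentially the same route as the paper: Fourier truncation of $u_0$ so that $\widehat{u_0^\lambda}$ has compact support, convergence of the data in $H^2$ so that Lemma \ref{otg} yields $u^\lambda(t)\to u(t)$ in $H^s$ for $s\in[0,2)$, and Paley--Wiener together with the analyticity-propagation result of \cite{K-M} to conclude that each $u^\lambda(t,\cdot)$ is entire analytic. The only (harmless) difference is that you explicitly verify the $H^2$ convergence of the truncated data, which the paper leaves implicit.
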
%

\bigskip
{\bf 3}\,\,
If  $u_0 \in {L^2(d\dot{\mu}_{\theta})}$, $\theta \in [0,1]$,
$\lambda > 0$ and $u_0^{\lambda}(x)= \mathcal{F}^{-1}\left(
\Chi_{\left\{|\xi|< \lambda \right\}} \widehat{u_0}\right)(x)$,
then

\begin{equation}
\label{2.40}
    \|u_0^\lambda\|_{L^2(d\dot{\mu}_{\theta})} \leq
    \|u_0\|_{L^2(d\dot{\mu}_{\theta})}.
\end{equation}
In fact,
if $\theta=0$, \eqref{2.40} is a direct consequence of
Plancherel's theorem and definition of $u_0^\lambda$.
If $\theta=1$, using properties of Fourier transform we obtain
$$
|\widehat{x u_0^{\lambda}}(\xi)|=|\partial_{\xi}
\widehat{u_0^{\lambda}}(\xi)|=|\Chi_{\left\{|\xi|< \lambda
\right\}} \partial_{\xi} \widehat{u_0}(\xi)|=\Chi_{\left\{|\xi|<
\lambda \right\}}|\widehat{x u_0}(\xi)|.
$$
Thus by Plancherel's equality
\begin{equation*}
\int_{\R}x^2|u_0^{\lambda}(x)|^2 dx = \int_{\R}|\widehat{x
u_0^{\lambda}}(\xi)|^2 d\xi \le \int_{\R}|\widehat{x u_0}(\xi)|^2
d\xi=\int_{\R}|x u_0(x)|^2 dx.
\end{equation*}
When $\theta \in (0,1)$, we obtain \eqref{2.40} by interpolation
between the cases $\theta=0$ and $\theta=1$, see \cite{B-L}.
\end{remark}

\bigskip
\subsection{Main result}

Now, we state our main theorem of global existence:
\begin{theorem}\label{t1.2}
The IVP (\ref{IVP}) is globally well-posed in
$\mathcal{X}^{2,\theta}$ for any  $0\le \theta \le 1$ fixed.
Moreover, the solution $u$ of (\ref{IVP}) satisfies, for each $t
\in [-T,T]$
\begin{equation*}
  \|u(t)\|_{L^2(d\dot{\mu}_\theta)}^2  \leq C \; \Big(\|u_0\|_{L^2}^2
  + \|u_0\|_{L^2(d\dot{\mu}_\theta)}^2 \, + 1 \Big),
\end{equation*}
where $C=  C
(\theta,\|u(t)\|_{H^s},\|u(0)\|_{L^2},\|u_x(0)\|_{L^2},\|u_{xx}(0)\|_{L^2},T),
s> 1/2$.
\end{theorem}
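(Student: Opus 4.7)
The strategy is to combine the global $H^2$ theory with the Abstract Interpolation Lemma, applied to an approximating sequence of smooth solutions, and then pass to the limit.

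First, I would regularize the initial datum by a Fourier cutoff $u_0^\lambda:=\mathcal{F}^{-1}(\chi_{\{|\xi|<\lambda\}}\widehat{u_0})$ (possibly multiplied by a smooth spatial cutoff to localize $u_0^\lambda$ inside a large ball so as to enforce condition \eqref{AILC4} on the initial datum). By the remark following Lemma \ref{otg}, $u_0^\lambda\to u_0$ in $H^2$ and in $L^2(d\dot\mu_\theta)$, the approximation satisfies \eqref{2.40}, and the corresponding solution $u^\lambda$ of \eqref{reivaxIVP} is globally defined and highly regular (in fact entire in $x$). Applying Proposition \ref{xav1} to $u^\lambda$ gives the weighted bound at the endpoint $\theta=1$:
\[
  \|u^\lambda(t)\|_{L^2(d\dot\mu)}^{2}\le e^{a_0 T}\bigl(\|u_0^\lambda\|_{L^2(d\dot\mu)}^{2}+a_0\hbar_0 T\bigr),
\]
while \eqref{CQ1} provides the endpoint $\theta=0$ bound $\|u^\lambda(t)\|_{L^2}=\|u_0^\lambda\|_{L^2}$.

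Next, for $\theta\in(0,1)$ I would apply Lemma \ref{1CL} to $f(t,x)=u^\lambda(t,x)$, treating $x$ as the integration variable. Conditions (i) and (ii) follow immediately from the two endpoint bounds just established. Condition (iv) on the initial datum is arranged by the additional spatial localization of $u_0^\lambda$ (alternatively, one argues directly from the absolute continuity of the measure $|u_0|^2\,d\dot\mu$). Condition (iii), the uniform-in-$t$ concentration estimate, is the technical heart of the argument; this is where the smoothness of $u^\lambda$ combined with the a priori $H^2$ bound from \eqref{CQ3} should be exploited — roughly, one chooses $\Theta$ small enough that the sublevel set $\{|u^\lambda(t)|^2<\Theta\}$ contributes only a uniformly bounded fraction of the weighted integral, using the $H^s$-bound with $s>1/2$ to control the superlevel set below via Sobolev embedding. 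Lemma \ref{1CL} then yields, for each $t\in[-T,T]$ and uniformly in $\lambda$,
\[
  \|u^\lambda(t)\|_{L^2(d\dot\mu_\theta)}^{2}\le \|u^\lambda(t)\|_{H^s}^{2\rho}\Bigl(K_0\|u_0^\lambda\|_{L^2}^{2}+K_1\|u_0^\lambda\|_{L^2(d\dot\mu_\theta)}^{2}+K_2\Bigr).
\]

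Finally, I would pass to the limit $\lambda\to\infty$. Lemma \ref{otg} supplies $u^\lambda(t)\to u(t)$ in $H^s$ for any $s\in[0,2)$, hence (up to a subsequence) pointwise almost everywhere; applying Fatou's lemma to the left-hand side and using \eqref{2.40} together with the uniform $H^s$-bound on the right-hand side yields the desired estimate for $u(t)$ with the stated dependence of $C$. The inequality then gives persistence in $\mathcal{X}^{2,\theta}$; uniqueness is inherited from the $H^2$ well-posedness of \eqref{IVP}, and continuous dependence with respect to $u_0\in\mathcal{X}^{2,\theta}$ follows by applying the same weighted estimate to the difference of two approximating sequences combined with the $H^2$ continuous dependence. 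The main obstacle I anticipate is verifying condition \eqref{AILC3} uniformly in $t$ and $\lambda$; all the other steps reduce to combining already-established a priori bounds with the interpolation lemma.
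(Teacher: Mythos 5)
Your overall architecture is exactly the paper's: regularize the data, verify that the smooth solutions $u^\lambda$ of \eqref{reivaxIVP} form a family $\clg{A}$ satisfying \eqref{AIL0}--\eqref{AILC4}, invoke Lemma \ref{1CL}, and pass to the limit using Lemma \ref{otg} (the paper also handles continuous dependence as you indicate, by Gronwall estimates for $w^\lambda=u^\lambda-v^\lambda$ in $L^2$ and $L^2(d\dot{\mu})$ followed by the interpolation lemma applied to $w^\lambda$). The endpoint bounds from \eqref{CQ1} and Proposition \ref{xav1}, the use of \eqref{2.40}, and the Fatou/limit step are all in order. One small correction: condition $(i)$ is not a consequence of the endpoint bounds; it is the nondegeneracy statement that $u^\lambda(t)$ is not a.e.\ zero, which the paper gets from $u_0\neq 0$, uniqueness in $H^2$, and the convergence \eqref{converg1}.

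The genuine gap is the one you yourself flag: condition \eqref{AILC3}. Your sketch — choose $\Theta$ small and control the superlevel set via the $H^s$ ($s>1/2$) bound and Sobolev embedding — does not close it, for two reasons. First, the lemma requires $\Theta$ and $\gamma_1$ to be \emph{uniform} over the whole family and over $t\in[-T,T]$, and $\Theta$ enters the constants $K_0,K_1$ through $(4/\Theta)^{\rho+1}$, so $\Theta$ cannot be allowed to degenerate as $\lambda\to\infty$. Second, and more fundamentally, the Sobolev bound controls $\|u^\lambda(t)\|_{L^\infty}$ from \emph{above}, which limits the superlevel set, whereas what \eqref{AILC3} demands is that the \emph{sublevel} set $\{|u^\lambda(t)|^2<\Theta\}$ carry only a fraction $\gamma_1<1$ of the weighted mass $\int|x|^{2\theta}|u^\lambda(t)|^2\,dx$. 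Since the weight $|x|^{2\theta}$ is large precisely in the far tail where $|u^\lambda(t)|$ is small, the weighted integral is naturally in danger of being dominated by the sublevel region, so an upper bound on $|u^\lambda|$ gives no leverage here. The paper does not argue directly either: it establishes $(iii)$ (and $(iv)$) by contradiction, assuming a bad sequence $\lambda_n$, $t_0$ exists for every $\Theta>0$ and letting $\Theta\to 0^+$ (respectively $R\to+\infty$) to reach an absurdity. To make your proof complete you would need either to reproduce such a compactness/contradiction argument exploiting the uniform convergence \eqref{converg1}, or to supply a quantitative decay estimate in the weighted tail; as written, the step you call the technical heart is asserted rather than proved.
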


\begin{proof}
Let $T>0$ and $u_0 \in \mathcal{X}^{2,\theta}$, $u_0 \neq 0$,
$\theta \in [0,1]$, we know that that there exists an function $u
\in C([-T,T], H^{2})$ such that the IVP \eqref{IVP} is global
well-posed in $H^{2}$. Is well know that $\mathbf{S}(\R)$ is dense
in $\mathcal{X}^{s,\theta}$. Then for $u_0 \in
\mathcal{X}^{2,\theta}$ there exist a sequence $(u_0^{\lambda})$
in $\mathbf{S}(\R)$ such that \be\label{converg} u_0^{\lambda} \to
u_0 \quad \textrm{in}\,\, \mathcal{X}^{2,\theta}. \ee
By \eqref{converg} and Lemma \ref{otg} the sequence of solutions
$u^{\lambda}(t)$ associated to IVP \eqref{reivaxIVP} and with
initial data $u_0^{\lambda}$ satisfy
\be\label{converg1}
 \sup_{t \in [-T,T]} \|u^{\lambda}(t)
 -u(t)\|_{H^s} \stackrel{\lambda \to \infty}{\rightarrow} 0
 \quad s \in [0,2).
 \ee
Suppose temporarily that the solutions $u^{\lambda}$ of the IVP
\eqref{reivaxIVP} satisfy the conditions (i)-(iv) of Section
\ref{CL}. Therefore Lemma \ref{1CL} gives
\begin{equation*}
\int_\RR |\xi|^{2 \theta} |u^{\lambda}(t,\xi)|^2 \, d\xi \leq C \;
\big(\int_\RR |u^{\lambda}(0,\xi)|^2 \,
  d\xi + \int_\RR |\xi|^{2 \theta} |u^{\lambda}(0,\xi)|^2 \,  d\xi + 1 \big),
\end{equation*}
where $C=  C
(\theta,\|u^{\lambda}(t)\|_{H^s},\|u^{\lambda}(0)\|_{L^2},\|u^{\lambda}_x(0)\|_{L^2},
\|u^{\lambda}_{xx}(0)\|_{L^2},T)$, $s \in (1/2,2)$, taking the
limit when $\lambda \to \infty$, \eqref{converg1} implies
\begin{equation*}
\int_\RR |\xi|^{2 \theta} |u(t,\xi)|^2 \, d\xi \leq C \;
\big(\int_\RR |u(0,\xi)|^2 \,
  d\xi + \int_\RR |\xi|^{2 \theta} |u(0,\xi)|^2 \,  d\xi + 1 \big),
\end{equation*}
where $C=  C
(\theta,\|u(t)\|_{H^s},\|u(0)\|_{L^2},\|u_x(0)\|_{L^2},\|u_{xx}(0)\|_{L^2},T)$.
Thus $u(t) \in \mathcal{X}^{2,\theta}$, $\theta \in [0,1]$, $t \in
[-T,T]$, which proves the persistence. The global well-posedness
theory in $H^{2}$ implies the uniqueness and continuous dependence
upon the initial data in $H^2$, therefore is sufficient prove
continuous dependence in the norm
$\|\cdot\|_{L^2(d\dot{\mu}_{\theta})}$. Let $u(t)$ and $v(t)$ be
two solutions in $\mathcal{X}^{2,\theta}$, $\theta \in [0,1]$ of
the IVP \eqref{IVP} with initial dates $u_0$ and $v_0$
respectively, let $u^{\lambda}(t)$, $v^{\lambda}(t)$ be the
solutions of the IVP \eqref{reivaxIVP} with initial dates
$u_0^{\lambda}$ and $v_0^{\lambda}$ respectively such that
$u_{0}^{\lambda}, v_{0}^{\lambda} \in \mathbf{S}(\R)$,
$u_{0}^{\lambda} \to u_0$, $v_{0}^{\lambda} \to v_0$ in
$\mathcal{X}^{2,\theta}$ and with $\lambda >> 1$, we have
\begin{align*}
\|u(t)-v(t)\|_{L^2(d\dot{\mu}_{\theta})}\le & \|u(t)-u^{\lambda}(t)\|_{L^2(d\dot{\mu}_{\theta})}+\|u^{\lambda}(t)-v^{\lambda}(t)\|_{L^2(d\dot{\mu}_{\theta})}\\
&+\|v^{\lambda}(t)-v(t)\|_{L^2(d\dot{\mu}_{\theta})}.
\end{align*}
 Convergence in \eqref{converg1}
 implies for $\lambda >>1$ that
 $$|u(x,t)-u^{\lambda}(x,t)|\le 2 |u(x,t)| \quad \textrm{and} \quad \quad |v(x,t)-v^{\lambda}(x,t)|\le 2 |v(x,t)|,$$
and the Dominated Convergence Lebesgue's Theorem gives
 \begin{align*}\|u(t)-u^{\lambda}(t)\|_{L^2(d\dot{\mu}_{\theta})}\to 0 \quad \textrm{and} \quad\|v^{\lambda}(t)-v(t)\|_{L^2(d\dot{\mu}_{\theta})}\to 0.
\end{align*}
Let $w^\lambda:=u^{\lambda}-v^{\lambda}$, then $w^\lambda$
satisfies the equation
\begin{align*}
w^\lambda_t &+ i
aw^\lambda_{xx}+bw^\lambda_{xxx}+c(|u^{\lambda}|^{2}w^\lambda+u^{\lambda}v^{\lambda}\bar{w}^{\lambda}+|v|^{2}w^\lambda)
\\
&+d(u^{\lambda}v^{\lambda}_x\bar{w}^\lambda+ w^\lambda
\bar{v}^{\lambda}v^{\lambda}_x+|u^{\lambda}|^{2}w^\lambda_{x})
+e(w^\lambda u^{\lambda}\bar{u}^{\lambda}_x+w^\lambda
v^{\lambda}\bar{u}^{\lambda}_x+(v^{\lambda})^{2}\bar{w}^\lambda_{x})=0.
\end{align*}
Then, we multiply the above equation by $\bar{w}^\lambda$,
integrate on $\R$ and take two times the real part, to obtain
$$
  \partial_t \int_\R |w^\lambda(t,x)|^2 \, dx \leq
  h(\|u_0\|_{H^2},\|v_0\|_{H^2}) \,
  \int_\R |w^\lambda(t,x)|^2 \, dx,
$$
where we have used convergence \eqref{converg1}, Lema \ref{LPE}
and $h$ is a polynomial function with $h(0,0)= 0$. Therefore, by
Gronwall's Lema, we have
$$
  \|w^\lambda(t)\|_{L^2} \leq \exp\big(T \,
  h(\|u_0\|_{H^2},\|v_0\|_{H^2})\big) \, \|w^\lambda_{0}\|_{L^2},
$$
which gives the continuous dependence in case $\theta= 0$.

Now, when $\theta=1$ a similar argument as used in the proof of
Proposition \ref{xav1} gives
$$
  \|w^\lambda(t)\|_{L^{2}(d\dot{\mu})} \leq \exp\big(T \,
  h_1(\|u_0\|_{H^2},\|v_0\|_{H^2})\big) \,
  \Big(\|w^\lambda_{0}\|_{L^{2}(d\dot{\mu})} + h_1(\|u_0\|_{H^2},\|v_0\|_{H^2}) \Big),
$$
where $h_1$ is a continuous function with $h_1(0,0)=0$.

Consequently, applying the Abstract Interpolation Lemma, we obtain
the continuous dependence for $\theta \in (0,1)$, where we have
assumed temporarily that the family $(w^\lambda)$ satisfies the
hypothesis of the Abstract Interpolation Lemma.

\bigskip
Finally we prove that the sequence of solutions
$(u^{\lambda_n}(t))$ satisfy the conditions (i)-(iv). Similarly,
we could obtain for the sequence $(w^{\lambda_n}(t))$.

\medskip
Condition (i): There exists $N_0$ such that $\forall \lambda \ge
N_0$ and for all $t \in [-T,T]$,
$$
  \clg{L}^1(\{x \in \R; u^{\lambda}(t,x) \neq 0\})> 0.
$$
In fact, by contradiction we suppose that there exist sequences
$\lambda_n \to \infty$ and $t_0 \in [-T,T]$
such that, $u^{\lambda_n}(t_0,x)=0$ almost everywhere.
By convergence \eqref{converg1} we conclude that $u(t_0)=0$, the
uniqueness of the solution implies $u=0$, in particular $u_0=0$,
which is a contradiction.

In the following, we consider $\clg{A}= (u^\lambda)_{\lambda>
N_0}$, see Section \ref{CL}.

Condition (ii): Inequality \eqref{AIL0} is a consequence of the
conservation law in $L^2$ and \eqref{AIL1} is a consequence of the
Proposition \ref{xav1}.

Condition (iii): We prove \eqref{AILC3} by contradiction. If there
exists a $\td{\theta} \in [0,1]$, such that for all $\Theta> 0$,
there exist $\lambda_n> N_0$, $t_0 \in [-T,T]$, $\gamma_0 \in
(0,1)$, such that
\begin{equation*}
    \int_{\{|u^{\lambda_n}(t_0)|^2 < \Theta\}} |u^{\lambda_n}(t_0)|^2 \; d\dot{\mu}_\td{\theta}
    > \gamma_0 \int_\RR |u^{\lambda_n}(t_0)|^2 \;
    d\dot{\mu}_\td{\theta}.
\end{equation*}
Then, taking the limit as $\Theta \to 0^+$ in the above
inequality, we arrive to contradiction.

Condition (iv): We prove \eqref{AILC4} also by contradiction. If
for all $R> 0$ and each $\gamma_2 \in (0,1)$, there exists
$\lambda_n> N_0$, such that
\begin{equation*}
    \int_{\{|\xi| \ge R\}} |u^{\lambda_n}(0)|^2 \; d\dot{\mu}
    > \gamma_2 \int_\RR |u^{\lambda_n}(0)|^2 \;
    d\dot{\mu}, \quad \gamma_2 \in(0,1),
\end{equation*}
similarly passing to the limit as $R \to +\infty$, carry to a
contradiction, which proves the condition (iv).
\end{proof}

\section*{Acknowledgements}

The authors were partially supported by FAPERJ through the grant
E-26/ 111.564/2008 entitled {\sl ``Analysis, Geometry and
Applications''}, and by the National Counsel of Technological and
Scientific Development (CNPq). The former by the grant
303849/2008-8 and the second author by the grant 311759/2006-8.

\newcommand{\auth}{\textsc}


\end{document}